\newtheorem{theorem}{Theorem}[section]
\newtheorem{corollary}[theorem]{Corollary}
\newtheorem{lemma}[theorem]{Lemma}
\newtheorem{proposition}[theorem]{Proposition}
\theoremstyle{definition}
\newtheorem{remark}[theorem]{Remark}
\newtheorem{assumption}[theorem]{Assumption}
\newtheorem{example}[theorem]{Example}
\newcommand{\ep}{\varepsilon}
\newcommand{\R}{\mathbb{R}}
\newcommand{\IC}{\mathbb{C}}
\newcommand{\IN}{\mathbb{N}}
\renewcommand{\L}{\operatorname{L}}
\newcommand{\W}{\operatorname{W}}
\newcommand{\C}{\operatorname{C}}
\newcommand{\BV}{\operatorname{BV}}
\renewcommand{\d}{\: \mathrm{d}}
\newcommand{\cl}[1]{\overline{#1}}
\DeclareMathOperator{\supp}{supp}
\newcommand{\dist}{\operatorname{dist}}
\newcommand{\gp}{C_{1,p}}
\newcommand{\cH}{\mathcal{H}}
\newcommand{\fu}{\mathfrak{u}}
\newcommand{\fv}{\mathfrak{v}}
\newcommand{\fw}{\mathfrak{w}}
\newcommand{\E}{\mathfrak{E}}
\def\Xint#1{\mathchoice
{\XXint\displaystyle\textstyle{#1}}
{\XXint\textstyle\scriptstyle{#1}}
{\XXint\scriptstyle\scriptscriptstyle{#1}}
{\XXint\scriptscriptstyle
\scriptscriptstyle{#1}}
\!\int}
\def\XXint#1#2#3{{\setbox0=\hbox{$#1{#2#3}{
\int}$ }
\vcenter{\hbox{$#2#3$ }}\kern-.6\wd0}}
\def\barint{\,\Xint -}
\title[Sobolev functions vanishing on a boundary part]
      {Characterizations of Sobolev functions that vanish on a part of the boundary}
\author{Moritz Egert}
\address{Laboratoire de Math\'{e}matiques d'Orsay \\ Univ.~Paris-Sud, CNRS, Universit\'{e} Paris-Saclay \\ 91405 Orsay, France}
\email{moritz.egert@math.u-psud.fr}
\author{Patrick Tolksdorf}
\address{Fachbereich Mathematik \\ TU Darmstadt \\ Schlossgartenstr.~7, 64289 Darmstadt, Germany}
\email{tolksdorf@mathematik.tu-darmstadt.de}
\subjclass[2010]{Primary: 46E35, 31B25; Secondary: 26B30.}
\keywords{Sobolev spaces, inner boundary trace, Lebesgue points, approximately continuous functions, functions of bounded variation, Ahlfors-regular sets, Sobolev extensions.}
\thanks{The first author was supported by a public grant as part of the FMJH. The second author was supported by ``Studienstiftung des deutschen Volkes''}
\begin{document}

\begin{abstract}
Let $\Omega$ be a bounded domain in $\R^n$ with a Sobolev extension property around the complement of a closed part $D$ of its boundary. We prove that a function $u \in \W^{1,p}(\Omega)$ vanishes on $D$ in the sense of an interior trace if and only if it can be approximated within $\W^{1,p}(\Omega)$ by smooth functions with support away from $D$. We also review several other equivalent characterizations, so to draw a rather complete picture of these Sobolev functions vanishing on a part of the boundary.
\end{abstract}
\maketitle
\section{Introduction}
\label{Sec:Introduction}

\noindent In this note we study first-order Sobolev spaces on a bounded domain $\Omega \subseteq \R^n$, $n~\geq~2$, encapsulating a Dirichlet boundary condition on a closed part $D$ of the boundary $\partial \Omega$. These function spaces appear quite naturally in the variational treatment of elliptic and parabolic divergence-form problems if the solution should satisfy a Dirichlet condition only on one part of the boundary, whereas on the complementary part other restrictions are imposed. For a comprehensive treatment and specific, physically relevant examples of such mixed boundary value problems the reader can refer to~\cite{RobertJDE}.

In these applications the underlying domain typically is too rough as to admit a trace operator for the whole Sobolev space $\W^{1,p}(\Omega)$, defined as the collection of all $u \in \L^p(\Omega)$ such that in the sense of distributions $\nabla u \in \L^p(\Omega)^n$. On the other hand, classical regularity results for solutions of mixed boundary value problems such as H\"older continuity are still available, see the recent developments in~\cite{Rehberg-terElstADE} and references therein. This motivates to investigate in which sense the Dirichlet boundary condition `$u = 0$ on $D$' can be understood if only $u \in \W^{1,p}(\Omega)$ holds.

Particularly with regard to mixed boundary value problems, the weakest meaningful definition of a closed subspace of $\W^{1,p}(\Omega)$, $1 < p < \infty$, incorporating the Dirichlet boundary condition on $D$ is given by approximation: The space $\W^{1,p}_D(\Omega)$ is defined as the closure in $\W^{1,p}(\Omega)$ of the set of test functions
\begin{align*}
 \C_D^\infty(\Omega) := \{u|_\Omega : u \in \C_0^\infty(\R^n),\, \supp(u) \cap D = \emptyset \}
\end{align*}
with support away from the closed set $D$. More generally, this definition makes sense if only $\Omega$ is open and $D \subseteq \cl{\Omega}$ is closed. Just recently, the structure of the spaces $\W^{1,p}_D(\Omega)$ has been studied with the objective of obtaining intrinsic characterizations that only use the given Sobolev function $u \in \W^{1,p}(\Omega)$ in order to decide whether or not $u \in \W^{1,p}_D(\Omega)$ holds. On the whole space $\Omega = \R^n$ this problem is perfectly understood due to the Havin-Bagby-Theorem \cite[Thm.~9.1.3]{Adams-Hedberg}, making use of the notion of \emph{$p$-capacity} of sets $E \subseteq \R^n$,
\begin{align*}
 \gp(E) := \inf \bigg\{ \int_{\R^n} |f|^p \d y : f \geq 0 \text{ on $\R^n$}, \, G_1 \ast f \geq 1 \text{ on $E$}\bigg\},
\end{align*}
where $1<p<\infty$ and $G_1 \in \L^1(\R^n)$ is the first-order Bessel kernel defined as the inverse Fourier transform of $\xi \mapsto (1+|\xi|^2)^{-1/2}$.

\begin{proposition}[The Havin-Bagby-Theorem]
\label{prop:HavinBagby}
Let $1<p<\infty$, let $E \subseteq \R^n$ be closed, and let $v \in \W^{1,p}(\R^n)$. Then $v \in \W^{1,p}_E(\R^n)$ if and only if for $\gp$-almost every $x \in E$,
\begin{align}
\label{eq:HavinBagby}
 \lim_{r \to 0} \frac{1}{|B(x,r)|} \int_{B(x,r)} v \d y = 0.
\end{align}
\end{proposition}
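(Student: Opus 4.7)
My plan is to identify condition \eqref{eq:HavinBagby} with the vanishing of the $p$-quasi-continuous representative on $E$ outside a $\gp$-null set, and then to establish the equivalence $v \in \W^{1,p}_E(\R^n) \Leftrightarrow \tilde{v} = 0$ $\gp$-quasi everywhere on $E$. The machinery I would invoke throughout is standard capacity theory: every $v \in \W^{1,p}(\R^n)$ admits a $p$-quasi-continuous representative $\tilde{v}$, unique modulo $\gp$-null sets; for $\gp$-quasi every $x$ the Lebesgue limit in \eqref{eq:HavinBagby} exists and equals $\tilde{v}(x)$; and the weak-type bound $\gp(\{|\tilde{u}| > \lambda\}) \lesssim \lambda^{-p} \|u\|_{\W^{1,p}(\R^n)}^p$ turns $\W^{1,p}$-convergence into $\gp$-quasi everywhere pointwise convergence of representatives along a subsequence.

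\textbf{Necessity.} Choose $v_k \in \C_E^\infty(\R^n)$ with $v_k \to v$ in $\W^{1,p}(\R^n)$. Each $v_k$ is smooth and vanishes on an open neighborhood of $E$, so $\tilde{v}_k \equiv 0$ on $E$. After extracting a subsequence that converges $\gp$-quasi everywhere to $\tilde{v}$, we obtain $\tilde{v} = 0$ $\gp$-q.e.\ on $E$, which translates into \eqref{eq:HavinBagby} on the same set.

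\textbf{Sufficiency.} This is the deeper half. I would first reduce to bounded $v$ by truncating at height $k \to \infty$; the truncations converge to $v$ in $\W^{1,p}(\R^n)$ and their representatives still vanish $\gp$-q.e.\ on $E$. Next, for $\epsilon > 0$ I would apply the Stampacchia truncation $T_\epsilon(v) := \operatorname{sgn}(v)(|v|-\epsilon)_+$, which converges to $v$ in $\W^{1,p}(\R^n)$ as $\epsilon \downarrow 0$ by the Sobolev chain rule and dominated convergence, and whose quasi-continuous representative vanishes on the $\gp$-quasi-open set $U_\epsilon := \{|\tilde{v}| < \epsilon\}$. By hypothesis $E$ lies in $U_\epsilon$ up to a $\gp$-null set. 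The task thus reduces to approximating each $T_\epsilon(v)$ in $\W^{1,p}$-norm by elements of $\C_E^\infty(\R^n)$.

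\textbf{The main obstacle} is exactly this last step: upgrading ``vanishing $\gp$-q.e.\ on a quasi-neighborhood of $E$'' to ``having topological support disjoint from $E$'', so that a standard mollification yields admissible test functions. I would resolve it by a capacitary cut-off. Given $\delta > 0$, enclose the $\gp$-null exceptional set $N_\epsilon \subseteq E \setminus U_\epsilon$ together with $\partial U_\epsilon \cap E$ in an honest open set $W_\delta$ with $\gp(W_\delta) < \delta$, and pick a Sobolev potential $\chi_\delta \in \W^{1,p}(\R^n)$ with $0 \leq \chi_\delta \leq 1$, $\chi_\delta \equiv 1$ on $W_\delta$, and $\|\chi_\delta\|_{\W^{1,p}(\R^n)}^p \lesssim \delta$. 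Then $(1 - \chi_\delta)\, T_\epsilon(v)$ vanishes on a genuine open neighborhood of $E$, converges to $T_\epsilon(v)$ in $\W^{1,p}(\R^n)$ as $\delta \to 0$, and can be approximated by $\C_E^\infty(\R^n)$ functions by convolution with a standard mollifier. Sending first $\delta \to 0$ and then $\epsilon \to 0$ concludes the argument.
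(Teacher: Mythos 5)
Your necessity argument is fine, and the skeleton of your sufficiency argument (truncation to bounded $v$, the cut-offs $T_\epsilon(v)$, a capacitary cut-off, mollification) is the standard route, but the step you yourself flag as the main obstacle contains a genuine gap. You propose to enclose the $\gp$-null set $N_\epsilon$ \emph{together with} $\partial U_\epsilon \cap E$ in an open set $W_\delta$ with $\gp(W_\delta)<\delta$. The null set can be so enclosed, but $\partial U_\epsilon\cap E$ cannot in general: $U_\epsilon=\{|\tilde v|<\epsilon\}$ is only quasi-open, and a point $x\in E$ with $\tilde v(x)=0$ may lie in the topological closure of $\{|\tilde v|\ge\epsilon\}$. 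Concretely, in the plane with $p=2$ take $E$ a segment and $v=\sum_i\varphi_i$, where the $\varphi_i$ are logarithmic spikes of height $1$ (equal to $1$ on a tiny ball $B(q_i,r_i)$, supported in $B(q_i,d_i)$ with $\|\nabla\varphi_i\|_{\L^2}^2\sim 1/\log(d_i/r_i)$), the supports kept disjoint from $E$ but the centers accumulating on a dense subset of $E$, and the norms summable. Then $v\in\W^{1,2}(\R^2)$, one can arrange that \eqref{eq:HavinBagby} holds at every $x\in E$ and $\tilde v=0$ $\gp$-q.e.\ on $E$ (so $v\in\W^{1,2}_E(\R^2)$, in accordance with the theorem), yet every point of $E$ is a limit of points where $\tilde v\ge 1$. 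Hence for $\epsilon<1$ $\gp$-almost every point of $E$ belongs to $\partial U_\epsilon\cap E$, so any open set containing it has capacity at least $\gp(E)>0$, and your $W_\delta$ does not exist. Without it, $(1-\chi_\delta)T_\epsilon(v)$ is not known to vanish a.e.\ on an honest open neighborhood of $E$, and the final mollification step collapses.

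The repair is standard and keeps your capacitary cut-off intact: invoke the Luzin-type property defining quasi-continuity instead of the boundary of $U_\epsilon$. Given $\delta>0$, pick an open set $G$ with $\gp(G)<\delta$ containing the $\gp$-null exceptional subset of $E$ and such that $\tilde v$ restricted to $\R^n\setminus G$ is continuous. Then $\{x\notin G:|\tilde v(x)|<\epsilon\}$ is relatively open in $\R^n\setminus G$, so $O:=G\cup\{x\notin G:|\tilde v(x)|<\epsilon\}$ is a genuine open neighborhood of $E$; on $O\setminus G$ one has $T_\epsilon(v)=0$, while on $G$ the truncated capacitary potential $\chi_\delta$ equals $1$ a.e., so $(1-\chi_\delta)T_\epsilon(v)=0$ a.e.\ on $O$ and mollification (together with a cut-off at infinity, since $\C_E^\infty(\R^n)$ consists of restrictions of $\C_0^\infty$-functions) produces admissible approximants; the convergence $(1-\chi_\delta)T_\epsilon(v)\to T_\epsilon(v)$ then follows as you indicate, using the boundedness of $T_\epsilon(v)$ and dominated convergence along a subsequence with $\chi_\delta\to0$ a.e. Note finally that the paper itself does not prove this proposition at all: it quotes it from \cite[Thm.~9.1.3]{Adams-Hedberg}, so your attempt should be measured against that reference rather than against an argument in the text.
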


As for domains, it was observed for instance in~\cite{TripleMitrea-Brewster} and \cite{Hardy-Poincare} that under suitable geometric assumptions every $u \in \W^{1,p}_D(\Omega)$ can be extended to a function $v$ not only in $\W^{1,p}(\R^n)$ but in $\W^{1,p}_D(\R^n)$. Consequently, the Havin-Bagby-Theorem can also be used to describe $\W^{1,p}_D(\Omega)$. While this characterization is `intrinsic' \cite{TripleMitrea-Brewster} in that it does not depend on the particular choice of the extension, it is certainly not canonical as it is given in terms of a Sobolev function different from $u$ somehow to be chosen yet. For a Sobolev function defined only on $\Omega$ the natural substitute for \eqref{eq:HavinBagby} is to require 
\begin{align}
\label{eq:InteriorTrace}
 \lim_{r \to 0} \frac{1}{|B(x,r)|} \int_{B(x,r) \cap \Omega} |u| \d y = 0 \qquad (\text{for $\gp$-almost every $x \in D$})
\end{align}
and the purpose of this note is to prove that under the following geometric assumption this interior trace condition indeed provides a new, canonical characterization of $\W^{1,p}_D(\Omega)$.

\begin{assumption}
\label{ass:W1pExtension}
The \emph{$\W^{1,p}$-extension property} holds around $\cl{\partial \Omega \setminus D}$, that is, every $x \in \cl{\partial \Omega \setminus D}$ has an open neighborhood $U_x$ such that $U_x \cap \Omega$ is connected and admits a bounded extension operator $\E_x: \W^{1,p}(U_x \cap \Omega) \to \W^{1,p}(\R^n)$.
\end{assumption}

By an extension operator we always mean a linear operator that does not modify functions on the smaller domain. Assumption~\ref{ass:W1pExtension} allows us to construct a bounded extension operator $\E: \W^{1,p}_D(\Omega) \to \W^{1,p}_D(\R^n)$ via a localization argument~\cite[Thm.~6.9]{Hardy-Poincare}, thereby making the Havin-Bagby-Theorem applicable as discussed above. As for mixed boundary value problems, this geometric assumption is rather common since it seems to be indispensable for treating most non-Dirichlet boundary conditions on $\cl{\partial \Omega \setminus D}$. Let us mention that it covers the more specific case of a bounded domain $\Omega$ exhibiting Lipschitz coordinate charts around $\cl{\partial \Omega \setminus D}$. For a further discussion the reader can refer to~\cite[Sec.~6.4]{Hardy-Poincare}. 

Somewhat hidden at first sight, one of the most important features of Assumption~\ref{ass:W1pExtension} is that it guarantees a certain regularity of $\Omega$ near the common frontier of $D$ with the complementary boundary part by requiring the $\W^{1,p}$-extension property around the \emph{closure} of $\partial \Omega \setminus D$. In fact, if the $\W^{1,p}$-extension property only holds around $\partial \Omega \setminus D$, then \eqref{eq:InteriorTrace} is neither necessary nor sufficient for $u \in \W^{1,p}(\Omega)$ to be a member of $\W^{1,p}_D(\Omega)$, see Section~\ref{Sec:Examples} for explicit counterexamples.

Assumption~\ref{ass:W1pExtension} is void if pure Dirichlet conditions $D = \partial \Omega$ are imposed and in this case the conclusion that \eqref{eq:InteriorTrace} characterizes $\W^{1,p}_D(\Omega) = \W^{1,p}_0(\Omega)$ is due to Swanson and Ziemer~\cite{Swanson-Ziemer}. We shall review their proof in Section~\ref{Sec:Ziemer-Swanson}, not only for convenience but also since our approach requires all details of theirs.   

The integrals in \eqref{eq:InteriorTrace} can be replaced by true averages if $\Omega$ satisfies the lower density condition
\begin{align*}
 \liminf_{r \to 0} \frac{|B(x,r) \cap \Omega|}{|B(x,r)|} > 0
\end{align*}
around $C_{1,p}$-every $x \in D$. However, we stress that this need not be the case, neither in the context of mixed boundary value problems nor within the setup of this note. Note also that \eqref{eq:InteriorTrace} -- in contrast to \eqref{eq:HavinBagby} -- uses the absolute value of $u$. This modification is necessary since our geometric assumptions do not rule out that the boundary part $D$ is contained in the interior of the closure of $\Omega$. In particular, we may think of a rectangle $R := (-2,2) \times (-4,4)$ in $\R^2$ sliced by $D := \{0\} \times (-2,2)$, and define the bounded domain $\Omega := R \setminus D$. Then any $v \in \W^{1,p}(\Omega)$ that takes the constant values $-1$ and $1$ on the subregions $(-1,0) \times (-1,1)$ and $(0,1) \times (-1,1)$, respectively, will satisfy \eqref{eq:HavinBagby} everywhere on $\{0\} \times (-1,1)$, which for any choice of $p$ is a set of positive $p$-capacity in the plane~\cite[Thm.~2.6.16]{Ziemer}.

Let us close by remarking that in the context of mixed boundary value problems the Dirichlet part $D$ typically is not just closed but satisfies for some $l \in (0,n]$ an additional density assumption with respect to the $l$-dimensional Hausdorff measure $\cH_l$ on $\R^n$,
\begin{align}
\label{eq:Ahlfors}
 \cH_l(B(x,r) \cap D) \sim r^l \qquad (x \in D,\, r< 1),
\end{align}
which is usually referred to as \emph{$l$-Ahlfors regularity}. In this case the capacities entering in \eqref{eq:HavinBagby} and \eqref{eq:InteriorTrace} can often be replaced with coarser and easier to handle Hausdorff measures. Moreover, for such geometric configurations there is yet another intrinsic characterization of $\W^{1,p}_D(\Omega)$ of a rather different nature: It relies on Hardy's inequality, that is, integration against the weight $x \mapsto \dist_D(x)^{-p}$, which is singular at the Dirichlet part~\cite[Thm.~3.2 \& 3.4]{Hardy-Poincare}. Here, $\dist_D$ denotes the Euclidean distance function to the closed set $D$.
\section{The main result}
\label{Sec:Result}

\noindent Besides the alluded interior trace result, we also see this note as good opportunity to concisely list the so-far known equivalent conditions for a function in $\W^{1,p}(\Omega)$ to vanish on $D$ in the weakest possible sense. This is being done in our following main theorem.

\begin{theorem}
\label{thm:main}
Let $\Omega \subseteq \R^n$ be a bounded domain, let $D$ be a closed subset of its boundary, and let $1<p<\infty$. Under Assumption~\ref{ass:W1pExtension} the following are equivalent for any given $u \in \W^{1,p}(\Omega)$.
\begin{enumerate}
 \item[(i)] The function $u$ belongs to $\W^{1,p}_D(\Omega)$.
 \item[(ii)] For $\gp$-almost every $x \in D$ it holds 
	\begin{align*} \lim_{r \to 0} \frac{1}{|B(x,r)|} \int_{B(x,r) \cap \Omega} |u| \d y = 0. \end{align*}
 \item[(iii)] There exists a Sobolev extension $v \in \W^{1,p}(\R^n)$ of $u$ that satisfies for $\gp$-almost every $x \in D$,
	\begin{align*} \lim_{r \to 0} \frac{1}{|B(x,r)|} \int_{B(x,r)} v \d y = 0. \end{align*}
\end{enumerate}
If in addition $D$ is $l$-Ahlfors regular and $n-p < l < n$, then these conditions are also equivalent to the following.
\begin{enumerate}
 \item[(iv)] There exists a Sobolev extension $v \in \W^{1,p}(\R^n)$ of $u$ that satisfies for $\cH_l$-almost every $x \in D$,
	\begin{align*} \lim_{r \to 0} \frac{1}{|B(x,r)|} \int_{B(x,r)} v \d y = 0. \end{align*}      
 \item[(v)] The function $u$ satisfies the Hardy-type condition
        \begin{align*}
         \int_\Omega \bigg|\frac{u}{\dist_D}\bigg|^p \d y < \infty.
        \end{align*}
\end{enumerate}
If even $n-p < l \leq n-1$, then the conditions above are also equivalent to the following.
\begin{enumerate}
\item[(vi)] For $\cH_l$-almost every $x \in D$ it holds 
	\begin{align*} \lim_{r \to 0} \frac{1}{|B(x,r)|} \int_{B(x,r) \cap \Omega} |u| \d y = 0. \end{align*}
\end{enumerate}
\end{theorem}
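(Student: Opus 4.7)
The proof naturally splits into three blocks: the unconditional equivalences (i)--(iii), the Ahlfors-regular refinements (iv) and (v), and the sharper density statement (vi).

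My plan is to establish (i) $\Leftrightarrow$ (iii) directly from the Havin-Bagby Theorem. For (i) $\Rightarrow$ (iii), Assumption~\ref{ass:W1pExtension} yields via a localization argument a bounded extension operator $\E : \W^{1,p}_D(\Omega) \to \W^{1,p}_D(\R^n)$ (as recalled in the introduction from the second author's prior work); then $v := \E u$ is a Sobolev extension of $u$ for which Proposition~\ref{prop:HavinBagby} gives the desired $\gp$-quasi-everywhere Lebesgue point condition. For (iii) $\Rightarrow$ (i), if some Sobolev extension $v$ satisfies the ball-average condition, the Havin-Bagby direction places $v \in \W^{1,p}_D(\R^n)$, and restricting its approximating sequence from $\C_D^\infty(\R^n)$ to $\Omega$ gives $u \in \W^{1,p}_D(\Omega)$. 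The implication (iii) $\Rightarrow$ (ii) is elementary: one bounds
\begin{align*}
\frac{1}{|B(x,r)|}\int_{B(x,r)\cap\Omega}|u|\d y \leq \frac{1}{|B(x,r)|}\int_{B(x,r)}|v|\d y,
\end{align*}
and then invokes the fact that $|v|\in\W^{1,p}(\R^n)$ has a $\gp$-quasi-everywhere Lebesgue representative, so that the vanishing of the average of $v$ implies the vanishing of the average of $|v|$ at $\gp$-quasi every $x \in D$.

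The core novelty is therefore (ii) $\Rightarrow$ (iii). My plan is to follow the Swanson--Ziemer strategy reviewed in Section~\ref{Sec:Ziemer-Swanson}: cover $\cl{\partial\Omega\setminus D}$ by finitely many neighborhoods $U_{x_1},\dots,U_{x_N}$ supplied by Assumption~\ref{ass:W1pExtension}, pick a subordinate partition of unity $\{\eta_j\}$ together with a cutoff $\eta_0$ supported away from $\cl{\partial\Omega\setminus D}$, and decompose $u = \eta_0 u + \sum_j \eta_j u$. The piece $\eta_0 u$ extends by zero to an element of $\W^{1,p}(\R^n)$ supported in $\Omega$; each $\eta_j u$ can be pushed through the local extension $\E_{x_j}$ to give $v_j \in \W^{1,p}(\R^n)$. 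The resulting $v := \eta_0 u + \sum_j v_j$ is a Sobolev extension of $u$, so it remains to verify that $v$ inherits the $\gp$-quasi-everywhere vanishing of averages. The key point is that a local extension operator on a connected domain with the $\W^{1,p}$-extension property does not destroy the Lebesgue point structure at interior boundary points: the hypothesis (ii) on $|u|$ combined with the Poincaré inequality in $U_{x_j}\cap\Omega$ forces the corresponding averages of $v_j$ over small balls to vanish as well. This is where the use of $|u|$ in (ii), rather than $u$, is essential, since it allows us to compare integrals of the extensions via monotone $L^1$-type bounds without appealing to any sign structure.

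For the Ahlfors-regular case I plan to exploit the standard capacity/measure comparison: under $l$-Ahlfors regularity with $n-p < l$, every $\gp$-null subset of $D$ is $\cH_l$-null, while the converse inclusion holds on subsets of $D$ precisely when $l$ is not too large. The implication (iii) $\Rightarrow$ (iv) is thus immediate, and (iv) $\Rightarrow$ (iii) follows because a Sobolev extension whose averages vanish $\cH_l$-a.e.\ on $D$ automatically does so $\gp$-quasi everywhere. The equivalence with (v) is imported from Theorems~3.2 and~3.4 of the authors' paper on Hardy's inequality, cited in the introduction. Finally, (vi) $\Rightarrow$ (ii) uses the same capacity comparison, while (ii) $\Rightarrow$ (vi) is trivial; the restriction $l \leq n-1$ in (vi) is needed to guarantee that the $\cH_l$-measure is sufficiently finely resolved on $D$ for the comparison to identify null sets correctly.

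The main obstacle I anticipate is the transfer of the interior-trace condition from $u$ to the globally extended $v$ in the step (ii) $\Rightarrow$ (iii), which requires verifying that the local Sobolev extensions $\E_{x_j}$ preserve the quasi-everywhere Lebesgue structure at points of $D\cap\cl{\partial\Omega\setminus D}$. Handling this interface carefully, using both the connectedness of $U_{x_j}\cap\Omega$ and a Poincaré-type estimate to control averages of $v_j$ on balls that are not necessarily contained in $\Omega$, will be the technical heart of the proof.
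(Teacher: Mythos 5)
Your outline is sound on the easy implications, but it leaves the genuinely new step, (ii) $\Longrightarrow$ (iii), essentially unproved, and the mechanism you propose for it is not the right one. Two concrete problems. First, the claim that $\eta_0 u$ ``extends by zero to an element of $\W^{1,p}(\R^n)$'' is not automatic: $\eta_0 u$ may be nonzero arbitrarily close to $D$, and the zero extension across $D$ belongs to $\W^{1,p}(\R^n)$ only because the average condition (ii) holds $\gp$-quasi-everywhere on $D$ and trivially on $\partial\Omega\setminus D$ (by the support of $\eta_0$); establishing this is exactly the content of the Swanson--Ziemer result (Proposition~\ref{Prop:SZ}), with its BV and absolute-continuity-on-lines machinery, which must be invoked rather than assumed. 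Second, for the localized pieces $u_j$ ($j\geq 1$) you assert that the local extension ``does not destroy the Lebesgue point structure'' because of a Poincar\'e inequality on $U_{x_j}\cap\Omega$; but the extension $v_j$ takes uncontrolled values on $B(x,r)\setminus\Omega$, and no Poincar\'e estimate on $U_{x_j}\cap\Omega$ reaches those values. What actually closes the argument is the measure density of the local extension domain: by Lemma~\ref{lem:Hajlasz} (Haj{\l}asz--Koskela--Tuominen), $U_{x_j}\cap\Omega$ is $n$-Ahlfors regular, so $|B(x,r)\cap\Omega|\gtrsim |B(x,r)|$ for $x\in D\cap U_{x_j}$, and combining this with the approximate continuity of the precise representative $\fw$ of $w=|v_j|$ at $\gp$-quasi-every $x$ and with hypothesis (ii), a splitting of the ball averages forces $\fw(x)=0$ (this is Step~3 of the paper's proof; see Remark~\ref{rem:key}). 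Without this density input the implication is simply false: the counterexamples in Section~\ref{Sec:Examples} show that when the extension property degenerates near the interface of $D$ with $\partial\Omega\setminus D$, condition (ii) no longer implies (i).

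The Hausdorff-measure parts of your plan are also not correct as stated. The comparison principle (Lemma~\ref{lem:ComparisonPrinciple}) only goes one way: $\gp$-null implies $\cH_l$-null for $n-p<l\leq n$. An $\cH_l$-null set can have positive $\gp$-capacity (take a set of dimension strictly between $n-p$ and $l$), so neither (iv) $\Longrightarrow$ (iii) nor anything like (vi) $\Longrightarrow$ (ii) follows ``automatically'' from a null-set comparison. The implication (iv) $\Longrightarrow$ (iii) needs the $l$-Ahlfors regularity of $D$ together with the Jonsson--Wallin restriction theory, as used in \cite[Cor.~4.5]{TripleMitrea-Brewster}; and (vi) is handled in the paper not by a comparison but by re-running the whole localization argument: the second part of Proposition~\ref{Prop:SZ} treats $\eta_0 u$ (this is precisely where $l\leq n-1$ enters, since one needs $\cH_l$-null to imply $\cH_{n-1}$-null so that the zero extension is approximately continuous $\cH_{n-1}$-almost everywhere, as required by Proposition~\ref{prop:abscontImpliesBV}), while Step~3 applies verbatim to the pieces $v_j$, yielding condition (iv) and then (i). Your stated reason for the restriction $l\leq n-1$ (fine resolution of $\cH_l$ on $D$ for a null-set comparison) does not reflect its actual role.
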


\begin{remark}
\label{rem:Hardy}
If the Hardy-type condition in (v) holds true for every $u \in \W_D^{1,p}(\Omega)$, then every such $u$ also satisfies Hardy's inequality
\begin{align*}
 \int_\Omega \bigg|\frac{u}{\dist_D}\bigg|^p \d y \lesssim \int_\Omega |u|^p + |\nabla u|^p \d y.
\end{align*}
Indeed, this is a consequence of the closed graph theorem applied to the multiplication operator with symbol $\dist_D(x)^{-p}$.
\end{remark}

\begin{remark}
\label{rem:condition6}
Even though the restriction $l \leq n-1$ in (vi) compared to $l < n$ in (iv) is of no harm for applications to mixed boundary value problems (where all too often $l = n-1$), the question whether it is needed as part of our main result remains open. It will become clear in Section~\ref{Sec:Proof} that the answer in the affirmative would require a rather different argument.
\end{remark}

Since first-order Sobolev spaces are invariant under truncation, $|u| \in \W^{1,p}(\Omega)$ holds for every $u \in \W^{1,p}(\Omega)$. The equivalence of (i) and (ii) in Theorem~\ref{thm:main} implies the following worth mentioning corollary.

\begin{corollary}
\label{cor:truncationD}
Presume the setup of Theorem~\ref{thm:main} and let $u \in \W^{1,p}(\Omega)$. Then $u \in \W^{1,p}_D(\Omega)$ if and only if $|u| \in \W^{1,p}_D(\Omega)$.
\end{corollary}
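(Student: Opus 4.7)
The plan is to read off the corollary directly from the equivalence (i)$\Leftrightarrow$(ii) of Theorem~\ref{thm:main}. First I would recall that first-order Sobolev spaces are stable under the truncation $u \mapsto |u|$, so that $|u| \in \W^{1,p}(\Omega)$ whenever $u \in \W^{1,p}(\Omega)$. In particular, both $u$ and $|u|$ are legitimate inputs to the theorem.

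Next I would observe that condition (ii) is formulated purely in terms of the absolute value of the Sobolev function at hand: the integrand appearing there for $u$ is $|u|$, while the integrand for $|u|$ is $\big||u|\big| = |u|$ as well. Hence the statements
\emph{``(ii) holds for $u$''} and \emph{``(ii) holds for $|u|$''} are literally the same assertion. Applying the equivalence (i)$\Leftrightarrow$(ii) of Theorem~\ref{thm:main} to $u$ and to $|u|$ separately then yields
\[
 u \in \W^{1,p}_D(\Omega) \iff \text{(ii) holds for $u$} \iff \text{(ii) holds for $|u|$} \iff |u| \in \W^{1,p}_D(\Omega),
\]
which is the claim. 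There is no genuine obstacle; the corollary is essentially a tautology once the characterization by the interior trace condition is in place, and its only real content is the observation that this characterization sees the function only through $|u|$.
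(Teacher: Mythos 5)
Your argument is correct and is exactly the paper's own reasoning: the corollary is deduced from truncation invariance of $\W^{1,p}(\Omega)$ together with the equivalence (i)$\Leftrightarrow$(ii) of Theorem~\ref{thm:main}, whose condition (ii) sees the function only through $|u|$. Nothing further is needed.
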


In Section~\ref{Sec:Proof} we shall give complete proofs of the new implications in Theorem~\ref{thm:main} and provide solid references for the already known ones. In the preliminary Section~\ref{Sec:Preliminaries} we collect some classical continuity properties of Sobolev functions and in Section~\ref{Sec:Ziemer-Swanson} we shall review Swanson and Ziemer's argument for $\W_0^{1,p}(\Omega)$ in order to set the stage for the general case.
\section{Continuity properties of Sobolev functions}
\label{Sec:Preliminaries}

\noindent A locally integrable function $f: \R^n \to \IC$ possesses a \emph{Lebesgue point} at $x \in \R^n$ if there exists a number $l = l(x)$ such that
\begin{align*}
 \lim_{r \to 0} \barint_{B(x,r)} |f(y) - l| \d y = 0.
\end{align*}
Here and throughout, averages are materialized by a dashed integral. We say that $f$ is \emph{approximately continuous} at $x \in \R^n$ if there exists a measurable set $E_x$ of full Lebesgue density at $x$, that is
\begin{align*}
 \lim_{r \to 0} \frac{|B(x,r) \cap E_x|}{|B(x,r)|} = 1,
\end{align*}
such that 
\begin{align*}
 \lim_{E_x \ni y \to x} f(y) = f(x).
\end{align*}
Lebesgue points and points of approximate continuity are related via the following lemma from classical measure theory \cite[Ch.~3,\, Sec.~1.4]{currents}.

\begin{lemma}
\label{lem:ApprCont}
Let $f: \R^n \to \IC$ be locally integrable. If $f$ possesses a Lebesgue point at $x \in \R^n$ with $l(x) = f(x)$, then $f$ is approximately continuous at $x$.
\end{lemma}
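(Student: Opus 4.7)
The plan is to construct the set $E_x$ explicitly as the complement of a carefully chosen union of shells around $x$ on which $f$ can deviate significantly from $f(x)$. After replacing $f$ by $f - f(x)$, I can assume $f(x) = 0$. The Lebesgue point hypothesis then reads $\barint_{B(x,r)} |f| \d y \to 0$ as $r\to 0$, and by Chebyshev's inequality, for each $k \in \IN$ the level set
\[
A_k := \{ y \in \R^n : |f(y)| \geq 1/k \}
\]
satisfies $|A_k \cap B(x,r)|/|B(x,r)| \to 0$ as $r \to 0$.

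Next I would select a strictly decreasing sequence $r_k \to 0$ such that for every $r \leq r_k$ one has $|A_k \cap B(x,r)|/|B(x,r)| \leq 2^{-k}$, and I define the candidate exceptional set
\[
F := \bigcup_{k=1}^\infty A_k \cap \bigl( B(x,r_k) \setminus B(x,r_{k+1}) \bigr)
\]
together with $E_x := \R^n \setminus F$. The verification then splits into two checks. First, if $y \in E_x$ satisfies $|y-x| < r_k$, then $|y-x|$ lies in some shell $[r_{j+1}, r_j)$ with $j \geq k$, and by construction $y \notin A_j$, so $|f(y)| < 1/j \leq 1/k$; this yields $f(y) \to f(x)$ as $E_x \ni y \to x$. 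Second, for any $r \in [r_{K+1}, r_K)$, the shells with index $k < K$ contribute nothing to $F \cap B(x,r)$, the shell with $k = K$ contributes at most $2^{-K}|B(x,r)|$, and each shell with $k > K$ contributes at most $2^{-k}|B(x,r_k)| \leq 2^{-k}|B(x,r)|$. Summing the geometric tail yields
\[
\frac{|F \cap B(x,r)|}{|B(x,r)|} \leq 2^{-K+1},
\]
so $|E_x \cap B(x,r)|/|B(x,r)| \to 1$ as $r \to 0$, which is the full-density property.

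The main obstacle is purely bookkeeping: one has to make sure the radii $r_k$ are chosen so that both the fine separation property (for convergence along $E_x$) and the geometric summability of the density defects hold simultaneously. The choice $r \leq r_k \Rightarrow |A_k \cap B(x,r)|/|B(x,r)| \leq 2^{-k}$ combined with the fact that only shells with index $k \geq K$ contribute inside $B(x, r_K)$ makes the two requirements compatible without any further interplay, so no serious difficulty arises beyond this organization of the estimates.
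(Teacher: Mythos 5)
Your argument is correct: the Chebyshev estimate for the level sets $A_k$, the choice of radii $r_k$, the shell decomposition defining $F$, and the two verifications (convergence of $f$ along $E_x=\R^n\setminus F$ and the geometric-series bound $|F\cap B(x,r)|\leq 2^{-K+1}|B(x,r)|$ for $r\in[r_{K+1},r_K)$) all check out, including the observation that shells of index $k<K$ miss $B(x,r)$ entirely. The paper offers no proof of this lemma, citing it as classical measure theory, and your construction is precisely the standard argument one finds in those references, so there is nothing further to reconcile.
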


Next, let us recall that $p$-capacities and Lebesgue points for Sobolev functions $v \in \W^{1,p}(\R^n)$, $1<p<\infty$, are intrinsically tied to each other by the fact that the limit of averages
\begin{align*}
 \lim_{r \to 0} \barint_{B(x,r)} v \d y =: \fv(x)
\end{align*}
is finite for $\gp$-almost every $x \in \R^n$. The so-defined function $\fv$ reproduces $v$ within its Lebesgue class and is called \emph{precise representative} of $v$. For convenience we set $\fv(x) = 0$ if the limit above does not exist. The Lebesgue Differentiation Theorem for Sobolev functions asserts that for $\gp$-almost every $x \in \R^n$ we have
\begin{align*}
 \lim_{r \to 0} \barint_{B(x,r)} |v(y) - \fv(x)|^p \d y = 0.
\end{align*}
In particular, $\gp$-almost every point $x \in \R^n$ is a Lebesgue point for $\fv$ with $l(x) = \fv(x)$ and hence $\fv$ is approximately continuous $\gp$-almost everywhere. Often we shall not distinguish between $v$ and $\fv$ and simply speak of approximate continuity of $v$. The reader can refer to \cite[Sec.~3]{Ziemer} for proofs of these facts and further background. 

As a second continuity principle for Sobolev functions we need the following result \cite[Thm.~2.1.4]{Ziemer}. When speaking of properties that hold on \emph{almost all lines parallel to the $x_k$-axis}, where $1 \leq k \leq n$, we think of the supporting line as being identified with its base point in $\R^{n-1}$ and use the $(n-1)$-dimensional Lebesgue measure.

\begin{proposition}
\label{prop:continuitylines}
Let $U \subseteq \R^n$ be open, $1 < p < \infty$, and $u \in \L^p(U)$. Then $u \in \W^{1,p}(U)$ if and only if $u$ has a representative $\tilde{u}$ that is absolutely continuous on every compact interval contained in $U$ of almost all lines that are parallel to the coordinate axes and whose classical partial derivatives belong to $\L^p(U)$.
\end{proposition}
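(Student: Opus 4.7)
The plan is to prove each implication separately by combining mollification with Fubini's theorem, and to select the representative in a way that is compatible with every coordinate direction simultaneously.

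For the necessity direction, I would fix $u \in \W^{1,p}(U)$ and exhaust $U$ by relatively compact open subsets $U_1 \Subset U_2 \Subset \cdots$. On each $U_m$ the mollifications $u_j := u \ast \rho_{\ep_j}$ are smooth and, for $\ep_j$ small enough, satisfy $u_j \to u$ and $\partial_k u_j \to \partial_k u$ in $\L^p(U_m)$ for every $k \in \{1,\ldots,n\}$; after passing to a subsequence one may also arrange pointwise a.e.\ convergence. Fubini then shows that on almost every line $\ell$ parallel to the $x_k$-axis and meeting $U_m$, both $u_j|_\ell$ and $(\partial_k u_j)|_\ell$ converge in $\L^p(\ell \cap U_m)$, so the pair $(u|_\ell, \partial_k u|_\ell)$ realises a one-dimensional $\W^{1,p}$-function and thus admits a unique absolutely continuous representative on every compact subinterval of $\ell \cap U_m$. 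Now let $\tilde u$ be the Lebesgue-point representative of $u$, which agrees with $u$ almost everywhere. Because the set of non-Lebesgue points of $u$ has zero $\gp$-capacity, hence zero $(n-1)$-dimensional Hausdorff measure, a further application of Fubini shows that this set meets almost every axis-parallel line in a one-dimensional null set. Consequently $\tilde u|_\ell$ coincides outside a one-dimensional null set with the AC representative of $u|_\ell$ and, being determined by its Lebesgue values, is itself absolutely continuous on every compact subinterval. Letting $U_m$ exhaust $U$ produces the global representative, whose classical partial derivatives agree a.e.\ with $\partial_k u \in \L^p(U)$.

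For the sufficiency direction, assume $u$ has a representative $\tilde u$ as described, with classical partial derivatives in $\L^p(U)$. Fix $\varphi \in \C_0^\infty(U)$ and a coordinate direction $k$. On almost every line $\ell$ parallel to the $x_k$-axis the product $\tilde u \varphi$ is absolutely continuous with compact support in $\ell \cap U$, so the one-dimensional fundamental theorem of calculus yields
\begin{align*}
 \int_{\ell} \tilde u \, \partial_k \varphi \d x_k = - \int_{\ell} (\partial_k \tilde u) \varphi \d x_k
\end{align*}
along such a line. Integrating over the remaining $n-1$ coordinates via Fubini produces the distributional identity $\int_U u \, \partial_k \varphi \d y = - \int_U (\partial_k \tilde u) \varphi \d y$, so $\partial_k \tilde u \in \L^p(U)$ is the $k$-th weak partial derivative of $u$. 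Since $k$ was arbitrary, $u \in \W^{1,p}(U)$.

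The main obstacle lies in the necessity direction. One cannot independently redefine $u$ on a set of full measure of lines in each of the $n$ coordinate directions, since the modifications could disagree on the intersections. Appealing to the Lebesgue-point representative sidesteps this issue because its exceptional set is small in the $(n-1)$-dimensional Hausdorff sense and therefore intersects almost every axis-parallel line in a one-dimensional null set \emph{simultaneously} for all coordinate directions. Any approach that attempted to construct the ACL representative one direction at a time would instead have to perform a delicate compatibility argument at the end.
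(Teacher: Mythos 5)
This proposition is quoted in the paper from Ziemer's textbook (Thm.~2.1.4) without proof, so your argument can only be judged on its own terms. Your sufficiency direction (an ACL representative with classical partials in $\L^p$ implies $u \in \W^{1,p}(U)$) is the standard integration-by-parts-on-lines argument and is fine modulo routine details (work on each component interval of $\ell \cap U$ and choose the compact subinterval so that its endpoints lie outside $\supp \varphi$).

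The necessity direction, however, has a genuine gap at its final step. Knowing that $\tilde u|_\ell$ agrees with an absolutely continuous function outside a one-dimensional null set does \emph{not} make $\tilde u|_\ell$ absolutely continuous: the precise representative is defined through $n$-dimensional ball averages and is assigned an arbitrary value (zero, say) at non-Lebesgue points, and nothing in your argument prevents such points from lying on the line; the phrase ``being determined by its Lebesgue values'' is not an argument, since the values of $u$ along a single line do not determine the precise representative at a point. Relatedly, your appeal to Fubini only exploits that the non-Lebesgue set is Lebesgue-null, which gives exactly the too-weak conclusion that almost every line meets it in a 1-D null set. The repair uses the stronger information you already invoked: since the non-Lebesgue set $E$ is $\cH_{n-1}$-null (this is where $p>1$ enters), its orthogonal projection in the $k$-th direction is Lebesgue-null in $\R^{n-1}$, so almost every line parallel to the $x_k$-axis misses $E$ \emph{entirely}. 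Even then you must identify $\tilde u$ with the line-wise AC representative at \emph{every} point of such a line; this can be done by upgrading the convergence of the mollifications along almost every line from $\L^p$ to locally uniform (a one-dimensional $\W^{1,1}$/absolute-continuity estimate) and using that $u \ast \rho_{\ep}(x) \to \tilde u(x)$ at every Lebesgue point $x$. Alternatively, the textbook route sidesteps the precise representative altogether: define $\tilde u$ as the everywhere pointwise limit of a mollifier subsequence (zero where the limit fails to exist); on almost every line the convergence is locally uniform, so the limit exists at every point of the line and is absolutely continuous there, and this single definition is automatically consistent for all coordinate directions --- which was exactly the compatibility issue you correctly identified as the main obstacle.
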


We also require basic knowledge on functions of bounded variation in several variables and refer to \cite[Ch.~5]{Ziemer} or \cite[Ch.~5]{EG} for further reading. The space $\BV(U)$ of functions of bounded variation on an open set $U \subseteq \R^n$ consists of all integrable functions $v$ on $U$ whose distributional partial derivatives are totally finite Radon measures on $U$. The next result, found for example in \cite[Thm.~5.3.5]{Ziemer} or \cite[Sec.~5.10.2]{EG}, provides the link with the classical one-dimensional notion of bounded variation. There, we define the \emph{essential variation} of a scalar-valued function $g$ on a closed interval $[a , b]$  by
\begin{align*}
 \mathrm{essV}_a^b (g) := \sup \Big\{ \sum_{i = 1}^k |g(t_i) - g(t_{i - 1})| \Big\},
\end{align*}
where the supremum is taken over all finite partitions of $[a , b]$ induced only by points $a < t_0 < t_1 < \dots < t_k < b$ at which $g$ is approximately continuous.

\begin{proposition}
\label{prop:BVcharOnLines}
Let $v \in \BV(\R^n)$. Fix a rectangular cell $R \subseteq \R^{n - 1}$, a space direction $1 \leq k \leq n$ of $\R^n$, and real numbers $a_k < b_k$. Denote points in $\R^n$ by $(x', x_k) \in \R^{n-1} \times \R$ and let $v_{x'} := v(x', \cdot)$ be the restriction of $v$ to the line parallel to the $x_k$-axis passing through $(x',0)$. Then
\begin{align*}
 \int_R \mathrm{essV}_{a_k}^{b_k} (v_{x'}) \d x^{\prime} < \infty.
\end{align*}
\end{proposition}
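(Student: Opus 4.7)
The plan is to reduce to the smooth case by mollification, apply Fubini at each regularised level, and pass to the limit using the $\L^1$-lower semicontinuity of the one-dimensional essential variation.

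First I would let $(\rho_\ep)_{\ep > 0}$ be a standard family of mollifiers on $\R^n$ and set $v_\ep := \rho_\ep \ast v$. Since $v \in \BV(\R^n)$, the distributional gradient $Dv$ is a finite $\R^n$-valued Radon measure, whence $v_\ep \in C^\infty(\R^n) \cap \W^{1,1}(\R^n)$, $v_\ep \to v$ in $\L^1_{\loc}(\R^n)$, and
\[ \int_{\R^n} |\nabla v_\ep| \d y \leq |Dv|(\R^n) =: M < \infty. \]
For each $x' \in R$ the smooth function $v_\ep(x', \cdot)$ is continuous, so its essential variation coincides with its classical variation, which by the fundamental theorem of calculus equals $\int_{a_k}^{b_k} |\partial_k v_\ep(x', t)| \d t$. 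Fubini's theorem then gives
\[ \int_R \mathrm{essV}_{a_k}^{b_k}(v_\ep(x', \cdot)) \d x' \leq \int_R \int_{a_k}^{b_k} |\partial_k v_\ep(x', t)| \d t \d x' \leq M. \]

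Passing to a subsequence, I would arrange that $v_\ep(x', \cdot) \to v(x', \cdot)$ in $\L^1([a_k, b_k])$ for almost every $x' \in R$. The decisive ingredient is the lower semicontinuity
\[ \mathrm{essV}_{a_k}^{b_k}(v_{x'}) \leq \liminf_{\ep \to 0} \mathrm{essV}_{a_k}^{b_k}(v_\ep(x', \cdot)), \]
which follows from the dual representation
\[ \mathrm{essV}_a^b(g) = \sup \Big\{ \int_a^b g \phi' \d t : \phi \in C_c^1((a,b)),\, \|\phi\|_\infty \leq 1 \Big\} \]
valid for $g \in \L^1([a,b])$: the right-hand side is a supremum of continuous functionals in the $\L^1$ topology, hence lower semicontinuous. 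Fatou's lemma applied in the $x'$-integral then yields
\[ \int_R \mathrm{essV}_{a_k}^{b_k}(v_{x'}) \d x' \leq \liminf_{\ep \to 0} \int_R \mathrm{essV}_{a_k}^{b_k}(v_\ep(x', \cdot)) \d x' \leq M, \]
completing the argument.

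The main obstacle is the dual formula for the essential variation, which identifies it with the one-dimensional $\BV$ seminorm and thereby yields $\L^1$-lower semicontinuity for free. Without this duality, one would have to argue partition-by-partition that the points of approximate continuity used in the definition of $\mathrm{essV}$ can be tracked through the regularisation, which is feasible but noticeably more delicate.
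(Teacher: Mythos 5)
Your argument is correct and is essentially the standard proof of this classical fact: the paper itself gives no proof but cites Ziemer, Thm.~5.3.5, and Evans--Gariepy, Sec.~5.10.2, where precisely this mollification--Fubini--Fatou scheme with lower semicontinuity of the one-dimensional variation is carried out. The dual formula $\mathrm{essV}_a^b(g)=\sup\bigl\{\int_a^b g\varphi' \d t : \varphi\in \C_c^1((a,b)),\, \|\varphi\|_\infty\le 1\bigr\}$ on which your lower semicontinuity step rests is itself the one-dimensional identification of the essential variation with the $\BV$ seminorm (Evans--Gariepy, Sec.~5.10, Thm.~1), so citing it leaves no gap.
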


The following extension result for functions of bounded variation is due to Swanson and Ziemer~\cite[Thm.~2.1]{Swanson-Ziemer}. By the \emph{zero extension} of a function $v$ defined on a set $U \subseteq \R^n$ we mean the trivial continuation of $v$ to the whole space by $0$.

\begin{proposition}
\label{prop:abscontImpliesBV}
Let $U \subseteq \R^n$ be an open set and let $u$ be a function defined on $U$ with the property that $u \in \BV(U^{\prime})$ for every open and bounded subset $U^{\prime} \subseteq U$. If the zero extension $u^*$ of $u$ is approximately continuous at $\cH_{n - 1}$-almost every $x \in \R^n$, then $u^* \in \BV (U^{\prime})$ for every open bounded subset $U^{\prime} \subseteq \R^n$.
\end{proposition}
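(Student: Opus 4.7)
The plan is to deduce $u^* \in \BV(U')$ from the slicing characterization of $\BV$ functions. The forward direction of this characterization is Proposition~\ref{prop:BVcharOnLines}; its classical converse implies that for bounded open $U' \subseteq \R^n$ it suffices to verify, for each coordinate direction $1 \leq k \leq n$, each rectangular cell $R \subseteq \R^{n-1}$, and each interval $[a_k, b_k]$ with $R \times [a_k, b_k] \supseteq \cl{U'}$, the bound
\begin{equation*}
 \int_R \mathrm{essV}_{a_k}^{b_k}\bigl(u^*(x', \cdot)\bigr) \, \d x' < \infty,
\end{equation*}
together with $u^* \in \L^1(U')$, which is immediate from $u \in \BV(U \cap U')$.

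To select good transverse slices I would combine two Fubini-type ingredients. First, the set $S \subseteq \R^n$ on which $u^*$ fails to be approximately continuous satisfies $\cH_{n-1}(S) = 0$, hence its image under the $1$-Lipschitz coordinate projection $\pi_k : \R^n \to \R^{n-1}$ has $\cH_{n-1}(\pi_k(S)) = 0$, i.e.\ is Lebesgue-null in $\R^{n-1}$. Second, Proposition~\ref{prop:BVcharOnLines} applied to $u \in \BV(U \cap (R \times (a_k, b_k)))$ produces a set of full Lebesgue measure in $R$ on which $u(x', \cdot)$ has finite $1$D variation $V_u(x')$ on $W_{x'} := \{t \in (a_k, b_k) : (x', t) \in U\}$, and $V_u$ is integrable over $R$. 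Intersecting, for Lebesgue-a.e.~$x' \in R$ both hold: $u^*$ is approximately continuous at every point of the line $\ell_{x'} = \{x'\} \times \R$, and $u(x', \cdot)$ is $1$D $\BV$ on $W_{x'}$.

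For such a good $x'$, $u^*(x', \cdot)$ on $(a_k, b_k)$ is just the zero extension of $u(x', \cdot)|_{W_{x'}}$. Writing the open set $W_{x'} = \bigsqcup_j (c_j, d_j)$, its $1$D essential variation is the sum of the $u(x', \cdot)$-variations on the components --- totalling $V_u(x')$ --- plus boundary contributions $|u(x', c_j^+)| + |u(x', d_j^-)|$, with one-sided limits existing since $u(x', \cdot)$ is $1$D $\BV$. The remaining step is to show that these boundary jumps vanish for every $j$; once this is done, integrating the equality $\mathrm{essV}_{a_k}^{b_k}(u^*(x', \cdot)) = V_u(x')$ over $R$ closes the argument.

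The main obstacle --- this is the heart of the Swanson--Ziemer argument --- is the transfer from the $2$D hypothesis at a component endpoint $(x', d_j)$ with value $u^*(x', d_j) = 0$ to the vanishing of the $1$D one-sided limit $u(x', d_j^-)$. Since $\ell_{x'}$ has $2$D measure zero, approximate continuity at the single point $(x', d_j)$ alone does not directly force anything on $\ell_{x'}$: a $2$D density-one set may avoid $\ell_{x'}$ entirely. The argument therefore exploits the \emph{simultaneous} validity of $2$D approximate continuity at every point of $\ell_{x'}$ together with the already available integrable $1$D variation of $u(x'', \cdot)$ along nearby transverse slices $x''$: a nonvanishing one-sided boundary limit on $\ell_{x'}$ would, via Fubini in the transverse variable, produce a plateau of positive $2$D measure near $(x', d_j)$ on which $u^*$ stays bounded away from $0$, contradicting approximate continuity there.
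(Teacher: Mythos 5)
Your reduction to the line--slicing criterion, the selection of base points $x'$ by projecting the $\cH_{n-1}$-null set of non-approximate-continuity under the $1$-Lipschitz coordinate projection, and the bookkeeping of the essential variation of $u^*(x',\cdot)$ as the interior variation plus one-sided limits at the endpoints of the components of $W_{x'}$ are all reasonable (for the record, the paper does not prove this proposition at all but quotes it from Swanson--Ziemer, so the comparison here is only about correctness). The genuine gap sits exactly at the step you yourself call the heart of the argument: for a line that is ``good'' in your sense it is simply \emph{not} true that the one-sided boundary limits vanish, and the contradiction mechanism you sketch --- a nonvanishing one-sided limit on $\ell_{x'}$ producing, via Fubini in the transverse variable and the integrable variation of nearby slices, a set of positive density near $(x',d_j)$ on which $|u^*|$ stays bounded away from $0$ --- is false. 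Take $n=2$, $U=\R\times(-\infty,0)$ and $u(x_1,x_2)=\phi(x_1/x_2^2)\,\psi(x_2)$, where $\phi$ is smooth, supported in $(-1,1)$, with $\phi(0)=1$, and $\psi$ is smooth, equal to $1$ on $(-1,0)$ and to $0$ below $-2$. A direct computation gives $u\in\W^{1,1}(U')\subseteq\BV(U')$ for every bounded open $U'\subseteq U$, and the zero extension $u^*$ is approximately continuous at \emph{every} point of $\R^2$: the only delicate point is the origin, where for each $\delta>0$ the set $\{|u^*|>\delta\}$ is contained in the region $\{x_2<0,\ |x_1|\le x_2^2\}$, which has density $0$ there. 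Yet the line $x_1=0$ satisfies both of your goodness conditions ($u^*$ is approximately continuous at each of its points, and $u(0,\cdot)=\psi$ has finite one-dimensional variation), while $u(0,x_2)\to 1\neq 0$ as $x_2\to 0^-$. So approximate continuity of $u^*$ along the chosen line, even combined with integrable variation of the neighbouring slices, cannot force the inner boundary limits to vanish, no plateau of positive measure appears, and your argument does not close. (Note the conclusion of the proposition is of course true in this example: $u^*\in\W^{1,1}_{\mathrm{loc}}(\R^2)$.)

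The statement you actually need is only an almost-every-line statement --- in the example the inner limit is $0$ on every line $x_1\neq 0$ --- but your selection of good lines does not encode the relevant exceptional set: the lines carrying a nonzero inner trace are not controlled by the two Fubini-type ingredients you invoke (projection of the bad set, slicing of $u$ inside $U$), and weakening your goal from ``the jumps vanish'' to ``the jumps are integrable in $x'$'' does not help, since no mechanism for either is provided. Identifying and excluding that null family of lines, i.e.\ showing that the total contribution of the boundary jumps is negligible, is essentially the content of the theorem and requires a genuinely different argument (for instance the original one of Swanson and Ziemer, which couples the variation of $u^*$ on families of lines with the $n$-dimensional density information, rather than arguing line by line); as it stands, the central step of your proposal is unproved and its proposed justification is refuted by the example above.
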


We close by stating two related results that will prove to be useful in the further course. Their proofs can be found in the textbooks \cite[Thm.~4.5.9(29)]{Federer} and \cite[Thm.~13.8]{Yeh}, respectively.

\begin{proposition}
\label{prop:BVandApprocContImpliesContOnLines}
If $v \in \BV (\R^n)$ is approximately continuous at $\cH_{n - 1}$-almost every $x \in \R^n$, then $v$ is continuous on almost all lines parallel to the coordinate axes. 
\end{proposition}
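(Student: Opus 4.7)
By symmetry among coordinate directions, it suffices to treat lines parallel to $e_n$; write $x = (x', x_n) \in \R^{n-1} \times \R$ and $v_{x'}(t) := v(x', t)$. The aim is to exhibit, for $\mathcal{L}^{n-1}$-almost every $x' \in \R^{n-1}$, a representative of $v_{x'}$ that is continuous on all of $\R$. First, applying Proposition~\ref{prop:BVcharOnLines} on a countable exhaustion of $\R^n$ by rational rectangular cells, one obtains $E_1 \subseteq \R^{n-1}$ of full Lebesgue measure such that for $x' \in E_1$ the slice $v_{x'}$ has finite essential variation on every compact interval, and therefore admits a canonical pointwise representative $\tilde v_{x'}$ possessing left- and right-limits at every point and at most countably many jump discontinuities. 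Second, let $N \subset \R^n$ denote the set where $v$ fails to be approximately continuous; by hypothesis $\cH_{n-1}(N) = 0$, so since the coordinate projection $\pi(x', x_n) := x'$ is $1$-Lipschitz we get $\mathcal{L}^{n-1}(\pi(N)) = 0$. Setting $E_2 := \R^{n-1} \setminus \pi(N)$, the whole line $L_{x'} = \{x'\} \times \R$ consists of approximate continuity points of $v$ whenever $x' \in E_2$.

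It remains to prove that for $x' \in E_1 \cap E_2$ the representative $\tilde v_{x'}$ has no jumps. I would argue by contradiction: if $\tilde v_{x'}$ had a jump at some $t_0$ with distinct approximate one-sided limits $a \neq b$, then the plan is to transfer this one-dimensional discontinuity into an $\cH_{n-1}$-nonnegligible portion of the approximate jump set $J_v$ of $v$, contradicting $J_v \subseteq N$ and $\cH_{n-1}(N) = 0$. Concretely, this would go through two classical pieces of BV theory: the structure identity $D^j v = (v^+ - v^-)\,\nu\, \cH_{n-1} \llcorner J_v$, which together with $\cH_{n-1}(J_v) = 0$ forces $D^j v = 0$ and hence $D^j_n v = 0$; and the coordinate slicing formula
\begin{align*}
 |D_n v|(B) = \int_{\R^{n-1}} |(v_{y'})'|(B_{y'}) \d y'
\end{align*}
valid for Borel sets $B \subseteq \R^n$ with slices $B_{y'}$, which respects the Lebesgue/Cantor/jump decomposition. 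Applying the latter to the jump parts gives $|(v_{y'})'|^{j} \equiv 0$ for $\mathcal{L}^{n-1}$-a.e. $y'$, so for those $y'$ the representative $\tilde v_{y'}$ has no jumps; in particular this contradicts the assumed jump of $\tilde v_{x'}$ at $t_0$. Iterating over the coordinate directions then concludes the proof.

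The main obstacle is that both ingredients above, the structure theorem for $D^j v$ and the coordinate slicing formula, are non-trivial tools from BV theory that are not developed in the excerpt; they are, however, standard material (found in Federer's book or in Ambrosio--Fusco--Pallara). A more hands-on route would attempt to show directly that a one-dimensional jump at $(x', t_0)$ cannot occur in isolation for a BV function, for instance by propagating the jump to a positive-measure family of nearby slices via an $\L^1_{\mathrm{loc}}$-continuity of the map $y' \mapsto v(y', \cdot)$; such an approach, however, seems to require essentially the same content as the slicing theorem and does not appear to shortcut it.
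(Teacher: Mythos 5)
Your proposal does not follow the paper's route: the paper does not prove Proposition~\ref{prop:BVandApprocContImpliesContOnLines} at all but quotes it from Federer (Thm.~4.5.9(29)), so you are attempting an independent proof via the Federer--Vol'pert structure theorem and coordinate slicing. Several of your reductions are fine modulo the standard BV results you cite: the projection argument ($\cH_{n-1}(N)=0$ and $\pi$ Lipschitz give $\mathcal{L}^{n-1}(\pi(N))=0$), the inclusion $J_v \subseteq N$ and hence $D^j v = 0$, and the compatibility of slicing with the jump part, which yields that for $\mathcal{L}^{n-1}$-a.e.\ $x'$ the slice $v_{x'}$ has derivative without atoms, so its good representative $\tilde v_{x'}$ is continuous. (As an aside, the contradiction framing is superfluous and slightly off: ``for a.e.\ $y'$ the slice has no jumps'' does not contradict a jump at one fixed $x'$ unless you first place $x'$ in that full-measure set; you should simply enlarge the exceptional set.)

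The genuine gap is that what you prove is weaker than the statement. You announce as your aim ``to exhibit, for a.e.\ $x'$, a representative of $v_{x'}$ that is continuous,'' but the proposition asserts that the given pointwise-defined function $v$ (the hypothesis of approximate continuity fixes its pointwise values) is itself continuous on almost every line; this is exactly what is needed in the paper, where in Steps 3--6 of the proof of Proposition~\ref{Prop:SZ} the specific representative $\fu^*$ must be continuous on the lines so that it can be identified everywhere with $\tilde u$ and fed into the Banach--Zarecki criterion. From your argument plus Fubini you only get $v(x',\cdot)=\tilde v_{x'}$ almost everywhere on the line, and these two functions could a priori differ on a nonempty one-dimensional null subset of the line, destroying continuity of $v(x',\cdot)$. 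The information you introduced precisely for this purpose --- that for $x' \notin \pi(N)$ every point of $\{x'\}\times\R$ is a point of approximate continuity of $v$ --- is never used to conclude $v(x',t)=\tilde v_{x'}(t)$ for every $t$, and this identification does not follow from the two tools you invoke: $n$-dimensional approximate continuity at $(x',t)$ says $v(x',t)$ equals the $n$-dimensional approximate limit, and one still needs that this approximate limit coincides, for a.e.\ $x'$ and all $t$, with the limit of the good representative of the slice. That is a nontrivial fine-properties/slicing statement (the restriction of the precise representative to a.e.\ line is a good representative of the slice; cf.\ Vol'pert, or Ambrosio--Fusco--Pallara Thm.~3.108), and it is essentially the substance of the Federer theorem the paper quotes. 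Without supplying it, your proof establishes only the existence of continuous representatives of the slices, not the proposition as stated and used.
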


\begin{proposition}[Banach-Zarecki Criterion]
\label{prop:charAbsCont}
A scalar-valued function $f$ on a compact interval is absolutely continuous if and only if it is continuous, of bounded variation, and carries sets of Lebesgue measure zero into sets of Lebesgue measure zero.
\end{proposition}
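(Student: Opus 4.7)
The plan is to prove the two implications separately; the forward one is largely routine and the converse requires the bulk of the work.

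For the forward direction, absolute continuity yields uniform continuity by definition, bounded variation follows by partitioning $[a,b]$ into $\lceil (b-a)/\delta \rceil$ subintervals of length at most the $\delta$ associated with $\varepsilon = 1$ (each then carries variation at most one), and Lusin's N condition follows by covering any null set by a countable family of disjoint open intervals of total length below the $\delta$ associated with an arbitrary $\varepsilon$, noting that each such interval is mapped by $f$ into a set whose diameter enters a sum of the form appearing in the definition of absolute continuity.

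For the converse, assume $f$ is continuous, of bounded variation, and satisfies Lusin N on $[a,b]$. I would show that the total variation function $V(x) := \mathrm{V}_a^x(f)$ is absolutely continuous; absolute continuity of $f$ itself then follows from the inequality $|f(y) - f(x)| \le V(y) - V(x)$ for $x \le y$. Since $f$ is BV, the set $A$ on which $f'$ exists and is finite has full measure in $[a,b]$, and by Lusin N the image of its complement $Z := [a,b] \setminus A$ is also null. The core estimate to establish is
\begin{align*}
 m^*(f(E)) \le \int_E |f'(t)| \d t \quad \text{for every measurable } E \subseteq [a,b],
\end{align*}
with $f'$ extended by zero on $Z$: one discards $E \cap Z$ by Lusin N and applies a Vitali covering argument on $E \cap A$, using the pointwise bound $|f(y) - f(x)| \le (|f'(x)| + \varepsilon)|y - x|$ valid for $y$ near each $x \in A$.

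Continuity of $f$ then lets this be promoted to a variation bound: for any partition $a = t_0 < \cdots < t_n = b$, the image $f([t_{i-1}, t_i])$ is a compact interval of length at least $|f(t_i) - f(t_{i-1})|$, so the core estimate yields $\sum_i |f(t_i) - f(t_{i-1})| \le \int_a^b |f'(t)| \d t$. Taking the supremum over partitions and combining with the classical reverse inequality shows $V(b) - V(a) = \int_a^b |f'(t)| \d t$, and the same argument applied on any subinterval identifies $V$ as an indefinite integral, hence absolutely continuous. The main obstacle I anticipate is the core estimate: without Lusin N, the image of the null set $Z$ could contribute positive measure to $f(E)$ and invalidate the bound, which is precisely where this hypothesis enters in an essential way.
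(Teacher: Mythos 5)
The paper does not prove this proposition at all: it is quoted as a known textbook result, with references to \cite[Thm.~4.5.9(29)]{Federer} and \cite[Thm.~13.8]{Yeh}, so there is no in-paper argument to compare against. Your proposal is a correct, essentially self-contained rendition of the classical proof of the Banach--Zarecki theorem: easy direction by the standard three observations, converse via the key estimate $m^*(f(E)) \le \int_E |f'(t)| \d t$, the identity $\mathrm{V}_a^b(f) = \int_a^b |f'(t)| \d t$, and absolute continuity of the indefinite integral transferred to $f$ through $|f(y)-f(x)| \le V(y)-V(x)$. Three small points deserve care when writing it out. In the forward direction, the Lusin N argument should pass from the covering intervals $(a_k,b_k)$ to the subintervals on which $f$ attains its maximum and minimum (these are still pairwise disjoint with total length $< \delta$), since the definition of absolute continuity controls endpoint increments, not diameters, of the original intervals. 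In the Vitali covering step for the core estimate, the disjoint subfamily covers $E \cap A$ only up to a Lebesgue null set, whose image must again be discarded via Lusin N (or one uses the version of the covering theorem producing a countable cover of all of $E \cap A$ with total length close to $m^*(E\cap A)$). Finally, the ``classical reverse inequality'' $\int_a^b |f'| \le \mathrm{V}_a^b(f)$, which also guarantees that $f'$ is integrable, should be cited or proved (e.g.\ via $|f'| \le V'$ a.e.\ and Fatou's lemma for the monotone function $V$). With these details filled in, your argument supplies a complete proof of a statement the paper only imports from the literature, which is a reasonable trade-off: the citation keeps the paper short, while your route makes the note self-contained at the cost of a page of standard real analysis.
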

\section{A review of Swanson and Ziemer's argument}
\label{Sec:Ziemer-Swanson}

\noindent In this section we review Swanson and Ziemer's \cite{Swanson-Ziemer} proof of `(ii) $\Longrightarrow$ (i)' in the case of pure Dirichlet conditions. Along the way we shall reveal a useful addendum to their result that is recorded as the second part of the following proposition. Let us stress that here the restriction to \emph{bounded} open sets is only for the sake of simplicity, compare with \cite{Swanson-Ziemer}.

\begin{proposition}
\label{Prop:SZ}
Let $U \subseteq \R^n$ be a bounded open set and let $1<p<\infty$. If $u \in \W^{1,p}(U)$ has the property
\begin{align*}
 \lim_{r \to 0} \frac{1}{|B(x,r)|} \int_{B(x,r) \cap U} |u| \d y = 0
\end{align*}
for $\gp$-almost every $x \in \partial U$, then $u \in \W^{1,p}_0(U)$. If $u$ has this property only for $\cH_l$-almost every $x \in \partial U$ and if $n-p < l \leq n-1$, then its zero extension $u^*$ is at least contained in $\W^{1,p}(\R^n)$ and satisfies
\begin{align*}
 \lim_{r \to 0} \frac{1}{|B(x,r)|} \int_{B(x,r)} |u^*| \d y = 0
\end{align*}
for $\cH_l$-almost every $x \in {}^c U$.
\end{proposition}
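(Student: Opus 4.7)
The plan is to reduce both parts to the single claim that the zero extension $u^{*}$ belongs to $\W^{1,p}(\R^n)$. Granted this, Part~1 follows from the Havin-Bagby Theorem (Proposition~\ref{prop:HavinBagby}) applied to the closed complement $E := \R^n \setminus U$: the required Lebesgue value $0$ for $u^{*}$ is trivial at every interior point of $E$ (where $u^{*}$ vanishes on a neighborhood) and is precisely the hypothesis at every $x \in \partial U$, once one observes $\int_{B(x,r) \cap U} |u| \d y = \int_{B(x,r)} |u^{*}| \d y$. This yields $u^{*} \in \W^{1,p}_{E}(\R^n)$, and restricting approximating test functions to $U$ gives $u \in \W^{1,p}_0(U)$. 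For Part~2 the second conclusion (the Lebesgue condition $\cH_l$-a.e.\ on $E$) splits in exactly the same way into the trivial interior case and the hypothesis on $\partial U$.

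To prove $u^{*} \in \W^{1,p}(\R^n)$ I would follow the Swanson-Ziemer strategy in four steps. (a) Approximate continuity of $u^{*}$ at $\cH_{n-1}$-almost every $x \in \R^n$: trivial on the interior of $E$; on $U$ it follows from $\gp$-a.e.\ Lebesgue points of $u$ (the Sobolev Lebesgue Differentiation Theorem recalled in Section~\ref{Sec:Preliminaries}) via Lemma~\ref{lem:ApprCont}, combined with the classical comparison $\gp(A) = 0 \Rightarrow \cH_{n-1}(A) = 0$ valid whenever $p > 1$; on $\partial U$ it comes from the hypothesis---directly $\gp$-a.e.\ in Part~1 and via $\cH_l(A)=0 \Rightarrow \cH_{n-1}(A)=0$ for $l \leq n-1$ in Part~2. (b) Since $u \in \W^{1,p}(U) \subseteq \BV(U^{\prime})$ for every bounded open $U^{\prime} \subseteq U$, Proposition~\ref{prop:abscontImpliesBV} yields $u^{*} \in \BV(V)$ for every bounded open $V \subseteq \R^n$, and hence $u^{*} \in \BV(\R^n)$ by boundedness of $\supp u^{*} \subseteq \cl{U}$. (c) Proposition~\ref{prop:BVandApprocContImpliesContOnLines} then makes $u^{*}$ continuous on almost every line parallel to a coordinate axis. (d) On such a line $u^{*}$ has finite essential variation on compact subintervals (Proposition~\ref{prop:BVcharOnLines}) and enjoys the Luzin $N$-property: on each open $U$-slice, $u$ admits an absolutely continuous representative on compact subintervals by Proposition~\ref{prop:continuitylines}, and continuity of $u^{*}$ forces it to coincide with this representative inside the slice, while $u^{*} \equiv 0$ outside $U$. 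The Banach-Zarecki Criterion (Proposition~\ref{prop:charAbsCont}) then promotes $u^{*}$ to be absolutely continuous on every compact interval of such a line, with classical partial derivatives equal to $\partial_k u$ on $U$ and $0$ outside, both of class $\L^p$. A final application of Proposition~\ref{prop:continuitylines} delivers $u^{*} \in \W^{1,p}(\R^n)$.

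The main obstacle lies in step~(d): approximate continuity $\cH_{n-1}$-a.e.\ is just enough to extend the BV property across $\partial U$ and to secure continuity on almost every line, but upgrading line-wise continuity to absolute continuity genuinely requires the $\W^{1,p}$ interior regularity of $u$ in order to furnish the $N$-property across boundary crossings. This also clarifies why the restriction $l \leq n-1$ in Part~2 appears intrinsic to the method: Proposition~\ref{prop:abscontImpliesBV} demands approximate continuity on an $\cH_{n-1}$-significant subset of $\R^n$, and no weaker density of approximate continuity will survive the BV extension step.
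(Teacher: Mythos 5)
Your proposal is correct and takes essentially the same route as the paper's proof (itself a review of the Swanson--Ziemer argument): approximate continuity of $u^*$ at $\cH_{n-1}$-almost every point, the BV extension via Proposition~\ref{prop:abscontImpliesBV}, continuity on almost all lines via Proposition~\ref{prop:BVandApprocContImpliesContOnLines}, the Banach--Zarecki criterion (Proposition~\ref{prop:charAbsCont}) with the Luzin property imported from Proposition~\ref{prop:continuitylines}, and finally Proposition~\ref{prop:continuitylines} again together with the Havin--Bagby Theorem (Proposition~\ref{prop:HavinBagby}). The only point where you are terser than the paper is the assertion that the classical partial derivatives of $u^*$ vanish (a.e.) on ${}^c U$, which is not automatic on $\partial U$ since $\partial U$ may have positive Lebesgue measure; the paper closes this with a short topological case distinction on each line $\lambda$, noting that points of $\lambda \cap \partial U$ that are isolated in $\lambda \cap {}^c U$ are at most countable, while at all other points the difference quotients computed along $\lambda \cap {}^c U$ force the derivative to be zero.
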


The following comparison principle asserts that the assumption in the second part of the proposition is indeed the weaker one. For a proof the reader can refer to \cite[Sec.~5]{Adams-Hedberg} for the case $p \leq n$ and \cite[Prop.~2.6.1]{Adams-Hedberg} for the case $p>n$.

\begin{lemma}
\label{lem:ComparisonPrinciple}
If $1 < p < \infty$, $l>0$, and $n-p< l \leq n$, then every set $E \subseteq \R^n$ of vanishing capacity $\gp(E) = 0$ also satisfies $\cH_l(E) = 0$.
\end{lemma}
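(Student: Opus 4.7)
My plan is to distinguish the two regimes $p > n$ and $p \leq n$, since the structure of the $(1,p)$-capacity behaves quite differently in each. In the simpler regime $p > n$, the Bessel kernel $G_1$ --- which is asymptotic to $|y|^{-(n-1)}$ near the origin and decays exponentially at infinity --- belongs to $\L^{p'}(\R^n)$, because $p' < n/(n-1)$ makes the local singularity $|y|^{-(n-1)p'}$ integrable at $0$. Hölder's inequality then yields $G_1 \ast f(x) \leq \|G_1\|_{p'} \|f\|_p$ uniformly in $x \in \R^n$, so the infimum defining $\gp(\{x\})$ at any single point is strictly positive. Hence $\gp(E) = 0$ forces $E = \emptyset$, and the conclusion $\cH_l(E) = 0$ follows automatically.

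For the substantive case $p \leq n$ I would argue by contrapositive: assuming $\cH_l(E) > 0$, I would produce a positive lower bound for $\gp(E)$. Reducing to compact subsets, Frostman's lemma supplies a nontrivial finite Radon measure $\mu$ supported in $E$ satisfying the growth bound $\mu(B(x,r)) \lesssim r^l$ for all $x \in \R^n$ and $r > 0$. The task is then to show that $\mu$ has finite $(1,p)$-energy in the sense of Wolff. I would estimate the nonlinear Wolff potential
\begin{align*}
 W^{\mu}_{1,p}(x) := \int_0^1 \bigg( \frac{\mu(B(x,r))}{r^{n-p}} \bigg)^{1/(p-1)} \frac{\d r}{r}
\end{align*}
pointwise via the Frostman bound, which turns the integrand into $r^{(l - n + p)/(p-1) - 1}$. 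The strict inequality $l > n-p$ is exactly what renders this integral convergent, so $W^{\mu}_{1,p}$ is bounded on $\R^n$. The classical Wolff inequality then controls $\gp(E)^{-1}$ from above by a constant multiple of $\int W^{\mu}_{1,p} \d \mu \leq C \mu(\R^n) < \infty$, forcing $\gp(E) > 0$ as desired.

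The main obstacle will be the second case, specifically setting up the capacity-energy duality for \emph{Bessel} potentials rather than the cleaner Riesz setting. The exponential decay of $G_1$ at infinity must be shown to be innocuous, which is why the Wolff potential is truncated at $r = 1$; this truncation reflects the essentially local nature of the comparison. The threshold $l > n-p$ is sharp in this scheme: at equality the Frostman estimate produces a logarithmically divergent integrand, which mirrors the well-known fact that sets of vanishing $(1,p)$-capacity can still carry positive $\cH_{n-p}$-measure in general.
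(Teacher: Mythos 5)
Your two-case scheme is essentially the argument behind the paper's own treatment: the paper gives no proof but cites exactly this dichotomy in Adams--Hedberg (Prop.~2.6.1 for $p>n$, where $G_1\in\L^{p'}(\R^n)$ forces every nonempty set to have positive capacity, and Sec.~5 for $p\le n$, where the comparison with Hausdorff measure is obtained from Frostman measures and Wolff-potential energy estimates). So in substance your sketch reconstructs the standard proof that the citation points to, and the key quantitative point --- that the Frostman bound $\mu(B(x,r))\lesssim r^l$ makes the truncated Wolff integrand $r^{(l-n+p)/(p-1)-1}$ integrable precisely because $l>n-p$ --- is correct, as is your remark on sharpness at $l=n-p$.

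Two steps should be tightened. First, ``reducing to compact subsets'' is not immediate for an arbitrary $E\subseteq\R^n$: Frostman's lemma and the inner regularity of $\cH_l$ require Borel (or Suslin) sets, and a non-measurable set of positive $\cH_l$-measure need not contain a compact subset of positive measure. The standard patch runs in the other direction: $\gp$ is an outer capacity, so $\gp(E)=0$ yields a $G_\delta$ superset $B\supseteq E$ with $\gp(B)=0$; it then suffices to prove $\cH_l(K)=0$ for every compact $K\subseteq B$, and for compact sets your Frostman--Wolff argument applies. Second, the final inequality as you state it (``$\gp(E)^{-1}$ is controlled by $\int W^{\mu}_{1,p}\d\mu$'') is not scale-invariant in $\mu$ and cannot hold literally. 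The correct conclusion goes through the dual definition of capacity, $\gp(K)^{1/p}=\sup\{\mu(K):\supp\mu\subseteq K,\ \|G_1\ast\mu\|_{\L^{p'}}\le 1\}$, combined with the Hedberg--Wolff theorem $\|G_1\ast\mu\|_{\L^{p'}}^{p'}\approx\int W^{\mu}_{1,p}\d\mu$: boundedness of the Wolff potential gives finite energy, so after normalizing the Frostman measure one gets $\gp(K)\ge c\,\mu(K)^p>0$ (equivalently $\gp(K)\gtrsim\mu(K)^p\bigl(\int W^{\mu}_{1,p}\d\mu\bigr)^{1-p}$). With these two repairs the proposal is a complete proof along the same lines as the cited source.
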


\begin{proof}[Proof of Proposition~\ref{Prop:SZ}]
The argument is in six consecutive steps. As Lebesgue points and points of approximate continuity are local properties, we can associate with $u$ a precise representative $\fu$ as in Section~\ref{Sec:Preliminaries}. Then we define a representative $\fu^*$ of the zero extension $u^* \in \L^p(\R^n)$ by $\fu^*(x) := \fu(x)$ if $x \in U$ and $\fu^*(x) := 0$ if $x \in {}^c U$. 

\medskip

\emph{Step~1: $\fu^*$ is approximately continuous $\cH_{n- 1}$-almost everywhere.} Recall from Section~\ref{Sec:Preliminaries} that $\fu$ is approximately continuous at $\gp$-almost every $x \in U$, hence at $\cH_{n-1}$-almost every $x \in U$ due to Lemma~\ref{lem:ComparisonPrinciple}. Again by this lemma and since every set of vanishing $\cH_l$-measure has vanishing $\cH_{n - 1}$-measure provided $l \leq n-1$, we obtain under both conditions of the proposition that for $\cH_{n - 1}$-almost every $x \in \partial U$ it holds
\begin{align*}
 \lim_{r \to 0} \frac{1}{|B(x , r)|} \int_{B(x , r)} |\fu^* - \fu^*(x)| \d y = \lim_{r \to 0} \frac{1}{|B(x , r)|} \int_{B(x , r) \cap U} |u| \d y = 0.
\end{align*}
Thus, $\fu^*$ is approximately continuous at these boundary points owing to Lemma~\ref{lem:ApprCont}. Finally, $\fu^*$ is identically zero on the open set ${}^c \cl{U}$ and hence (approximately) continuous at every $x \in {}^c \cl{U}$.

\medskip

\emph{Step~2: $\fu^*$ if of bounded variation on $\R^n$.} We simply have to combine Proposition~\ref{prop:abscontImpliesBV} with the first step and recall that $\fu^*$ vanishes outside of a bounded set.

\medskip

\emph{Step~3: $\fu^*$ is continuous on almost all lines parallel to the coordinate axes.} This is a direct consequence of the first two steps and Proposition~\ref{prop:BVandApprocContImpliesContOnLines}.

\medskip

\emph{Step~4: $\fu^*$ is of bounded variation on every compact interval of almost all lines parallel to the coordinate axes.} Combining Step~2 with Proposition~\ref{prop:BVcharOnLines}, we obtain that the essential variation of $\fu^*$ is bounded along every compact interval of almost all lines parallel to the coordinate axes. In view of Step~3 we may additionally assume that the restriction of $\fu^*$ to the respective lines is continuous and thus approximately continuous at every point. Hence, the definition of the essential variation collapses to the one of the standard one-dimensional variation and the claim follows.

\medskip

\emph{Step~5: $\fu^*$ is absolutely continuous on every compact interval of almost all lines parallel to the coordinate axes.} Due to Proposition~\ref{prop:charAbsCont} and the outcome of Steps~3 and 4 we only have to show that on almost all lines parallel to the coordinate axes the restriction of $\fu^*$ maps sets of one-dimensional Lebesgue measure zero into sets of one-dimensional Lebesgue measure zero. 

To this end let $\lambda$ be a line parallel to the $x_k$-axis passing through the point $x = (x^{\prime} , x_k)$, where we adopt notation from Proposition~\ref{prop:BVcharOnLines}. Owing to Steps~3 and 4 we may assume that $\fu^*(x^{\prime} , \cdot)$ is continuous and of bounded variation. Proposition~\ref{prop:continuitylines} provides yet another representative $\tilde{u}$ of $u \in \W^{1,p}(U)$ that is absolutely continuous on every compact interval contained in $U$ of almost all lines parallel to the $x_k$-axis. We may assume that this applies to $\lambda$ and in view of Fubini's theorem we may as well assume that $\fu(x',\cdot)$ and $\tilde{u}(x',\cdot)$ coincide almost everywhere on $\lambda \cap U$ with respect to the one-dimensional Lebesgue measure. By continuity they have to agree everywhere on $\lambda \cap U$, showing that we may additionally assume that $\fu^*(x',\cdot)$ is absolutely continuous on every compact interval contained in $\lambda \cap U$. 

Let now $E \subseteq \lambda$ be a set of vanishing one-dimensional Lebesgue measure. Being the zero extension of $\fu$, the function $\fu^*$ maps $E \cap {}^c U$ onto $\{0\}$, so that it remains to investigate what happens to the set $E \cap U$. To this end, let $I$ be an open subinterval of $\lambda \cap U$ and let $J \subseteq I$ be a compact interval. Then Proposition~\ref{prop:charAbsCont} guarantees that $\fu^* (E \cap J)$ has vanishing one-dimensional Lebesgue measure. In virtue of the regularity of the Lebesgue measure this property first carries over to $\fu^* (E \cap I)$ and then to $\fu^*(E \cap U)$.

\medskip

\emph{Step~6: Conclusion of the proof.} Due to Step~5 the classical partial derivatives of $\fu^*$ exist almost everywhere on almost all lines parallel to the coordinate axes. Since the restriction of $\fu^*$ to $U$ is a representative for $u \in \W^{1,p}(U)$,  Proposition~\ref{prop:continuitylines} yields that the classical partial derivatives of $\fu^*$ evaluated at points inside $U$ define $p$-integrable functions on $U$. Since $\fu^*$ vanishes on the open set ${}^c \cl{U}$, so do its classical partial derivatives. It remains to investigate the critical case, that is, the behavior at the boundary of $U$.

To this end, let $\lambda$ be one of the lines parallel to the coordinate axes on which $\fu^*$ has the differentiability properties above. Let $x \in \lambda \cap \partial U$ be such that the classical partial derivative of $\fu^*$ in the direction of $\lambda$ exists at $x$. 

By a topological case distinction, either there exists an open one-dimensional neighborhood $I \subseteq \lambda$ of $x$ such that $I \cap {}^c U = \{ x \}$ or $x$ can be approximated by a sequence of points $(x_j)_{j \in \IN} \subseteq \lambda \cap {}^c U$ that are all distinct from $x$. In the second case the classical partial derivative of $\fu^*$ at $x$ in the direction of $\lambda$ vanishes since $\fu^*(x) = \fu^*(x_j) = 0$ for all $j$. The first case looks rather odd but anyway it can occur at most countably often on $\lambda$ since $I$ is open and $x$ is the only point in $I$ with this property. Thus, without even investigating this first case, we can conclude that the classical partial derivative of $\fu^*$ in direction of $\lambda$ vanishes at almost every point of $\lambda \cap \partial U$ with respect to the one-dimensional Lebesgue measure. 

Taking into account Fubini's theorem, we can conclude that the classical partial derivatives of $\fu^*$ are $p$-integrable over $\R^n$. Consequently, Proposition~\ref{prop:continuitylines} yields that $u^*$ is contained in $\W^{1, p} (\R^n)$ and this already concludes the proof of the second statement of the proposition. In the first case we may now apply the Havin-Bagby-Theorem to $u^* \in \W^{1 , p}(\R^n)$ and obtain $u^* \in \W^{1 , p}_{{}^c U}(\R^n)$ from the assumption on $u$ and the fact that $u^*$ vanishes on $^{c} \cl{U}$. This precisely means $u \in \W^{1 , p}_0 (U)$.
\end{proof}
\section{Proof of the main result}
\label{Sec:Proof}

\noindent The proof of Theorem~\ref{thm:main} will be achieved through the eight implications below.

\subsection*{(iii) \texorpdfstring{$\Longrightarrow$}{=>} (i)}
If $u \in \W^{1,p}(\Omega)$ has a Sobolev extension $v \in \W^{1,p}(\R^n)$ that satisfies
\begin{align*} 
 \lim_{r \to 0} \frac{1}{|B(x,r)|} \int_{B(x,r)} v \d y = 0 
\end{align*}    
for $\gp$-almost every $x \in D$, then $v \in \W^{1,p}_D(\R^n)$ thanks to the Havin-Bagby-Theorem. By definition, this means that $v$ is contained in the $\W^{1,p}(\R^n)$-closure of $\C_D^\infty(\R^n)$. Restricting to $\Omega$, we find that $v|_\Omega$ is contained in the $\W^{1,p}(\Omega)$-closure of $\C_D^\infty(\Omega)$ and due to $v|_\Omega = u$ the conclusion $u \in \W^{1,p}_D(\Omega)$ follows.

\subsection*{(ii) \texorpdfstring{$\Longrightarrow$}{=>} (i)}
This is of course the most interesting implication. A part of the argument was inspired by \cite[Sec.~VIII.1]{Jonsson-Wallin}. Let us set some notation for a localization argument first. For $x \in \cl{\partial \Omega \setminus D}$ we let $U_x$ be as in Assumption~\ref{ass:W1pExtension} and pick a finite subcovering $U_{x_1}, \ldots, U_{x_N}$ of the compact set $\overline{\partial \Omega \setminus D}$. Then there exists $\ep > 0$ such that $U_{x_1},\ldots,U_{x_N}$ together with
\begin{align*}
 U_0 := \big\{y \in \R^n : \dist(y, \overline{\partial \Omega \setminus D}) > \ep \big\}
\end{align*}
form an open covering of $\overline{\Omega}$. Thus, on $\cl{\Omega}$ there is a $\C^\infty$-partition of unity $\eta_0, \ldots, \eta_N$ with the properties $0 \leq \eta_j \leq 1$ and $\supp(\eta_j) \subseteq U_j$. Here and in the following we abbreviate $U_{x_j}$ by $U_j$.

Now assume that $u \in \W^{1,p}(\Omega)$ satisfies (ii). We split $u = \sum_{j=0}^N u_j$, where the functions $u_j := \eta_j u$ are all contained in $\W^{1,p}(\Omega)$. We shall prove that each summand is in fact contained in $\W^{1,p}_D(\Omega)$. 

\medskip

\emph{Step 1: The case $j = 0$}. By assumption on $u$ we have for $\gp$-almost every $x \in D$ the limits
\begin{align*}
  0 \leq \lim_{r \to 0} \frac{1}{|B(x,r)|} \int_{B(x,r) \cap \Omega} |u_0| \d y \leq \lim_{r \to 0} \frac{1}{|B(x,r)|} \int_{B(x,r) \cap \Omega} |u| \d y = 0
\end{align*}
and for every $x \in \partial \Omega \setminus D$ the choice of $\eta_0$ implies
\begin{align*}
 \frac{1}{|B(x,r)|} \int_{B(x,r) \cap \Omega} |u_0| = 0
\end{align*}
provided $r \leq \ep$. Hence, for $\gp$-almost every $x \in \partial \Omega$ we have
\begin{align*}
 \lim_{r \to 0} \frac{1}{|B(x,r)|} \int_{B(x,r) \cap \Omega} |u_0| \d y = 0
\end{align*}
and Proposition~\ref{Prop:SZ} yields $u_0 \in \W^{1,p}_0(\Omega) \subseteq \W^{1,p}_D(\Omega)$ as required.

\medskip

\emph{Step 2: Preliminaries for the case $j \geq 1$}. Consider a summand $u_j$ with $j \geq 1$. Assumption~\ref{ass:W1pExtension} allows us to construct an extension $v_j \in \W^{1,p}(\R^n)$ that coincides with $u_j$ on the domain $U_j \cap \Omega$. We can further assume that $v_j$ is supported in $U_j$ and agrees with $u_j$ almost everywhere on $\Omega$ since otherwise we would replace $v_j$ by the extension $\chi v_j$, where $\chi \in \C_0^\infty(U_j)$ is identically $1$ on the support of $\eta_j$. Since $\W^{1,p}(\R^n)$ is invariant under truncation, we also have $w := |v_j|$ in this space. Let now $x \in D$ be such that the limits
\begin{align*}
 \fw(x) = \lim_{r \to 0} \frac{1}{|B(x,r)|} \int_{B(x,r)} w \d y \quad \text{and} \quad \lim_{r \to 0} \frac{1}{|B(x,r)|} \int_{B(x,r)} |w - \fw(x)| \d y
\end{align*}
exist and such that additionally
\begin{align}
\label{eq0:iitoi}
 \lim_{r \to 0} \frac{1}{|B(x,r)|} \int_{B(x,r) \cap \Omega} |u| \d y = 0
\end{align}
holds. Here, $\fw$ denotes the precise representative of $w$. By assumption and the Lebesgue Differentiation Theorem from Section~\ref{Sec:Preliminaries} the three conditions can simultaneously be matched for $\gp$-almost every $x \in D$. For the moment our task is to demonstrate $\fw(x) = 0$. In doing so, only the case $x \in U_j$ is of interest since $w = |v_j|$ has support in $U_j$.

\medskip

\emph{Step 3: Proof of $\fw(x) = 0$}. Lemma~\ref{lem:ApprCont} provides a measurable set $E_x$ with
\begin{align}
\label{eq1:iitoi}
 \lim_{r \to 0} \frac{|B(x,r) \cap E_x|}{|B(x,r)|} = 1
\end{align}
and the property that the restriction $\fw|_{E_x}$ is continuous at $x$. In order to simplify notation, we shall abbreviate $E_x = E$ and $B(x,r) = B(r)$ in the following. For $r>0$ we may write
\begin{align*}
 \frac{1}{|B(r)|} \int_{B(r)} w \d y = \frac{|B(r) \cap E|}{|B(r)|} \barint_{B(r) \cap E} \fw \d y + \frac{1}{|B(r)|} \int_{B(r) \cap {}^cE} w \d y.
\end{align*}
Here, the left-hand term tends to $\fw(x)$ in the limit $r \to 0$ by assumption on $x$ and so does the first term on the right-hand side thanks to \eqref{eq1:iitoi} and the continuity of $\fw|_{E}$ at $x$. Hence, the conclusion
\begin{align}
\label{eq2:iitoi}
\lim_{r \to 0} \frac{1}{|B(r)|} \int_{B(r) \cap {}^c E} w \d y = 0.
\end{align}
Since $w = |v_j| = |u_j|$ almost everywhere on $\Omega$, we can also decompose
\begin{align*}
 \frac{1}{|B(r)|} \int_{B(r)} w \d y 
 &= \frac{1}{|B(r)|} \int_{B(r) \cap \Omega} |u_j| \d y + \frac{1}{|B(r)|} \int_{B(r) \cap {}^c \Omega \cap E} \fw \d y \\
 &\quad+ \frac{1}{|B(r)|} \int_{B(r) \cap {}^c \Omega \cap {}^cE} w \d y.
\end{align*}
Again we investigate the behavior in the limit $r \to 0$: The left-hand term tends to $\fw(x)$ as before. From the pointwise bound $|u_j| \leq |u|$ and \eqref{eq0:iitoi} we deduce that the first term on the right-hand side vanishes. For the third term we obtain the same conclusion, this time using \eqref{eq2:iitoi}  and that $w$ is a nonnegative function. Altogether, we have found
\begin{align}
\label{eq3:iitoi}
\fw(x) = \lim_{r \to 0} \frac{1}{|B(r)|} \int_{B(r) \cap {}^c \Omega \cap E} \fw \d y.
\end{align}
If $B(r) \cap {}^c \Omega \cap E$ is a Lebesgue nullset for some $r > 0$, then $\fw(x) = 0$ holds and we can stop here. Otherwise, we use the identity 
\begin{align*}
 |A \cap C| - |A \cap B| + |A \cap B \cap {}^c C| = |A \cap {}^c B \cap C|
\end{align*}
for measurable sets $A,B,C \subseteq \R^n$ to write
\begin{align*}
 \frac{1}{|B(r)|} \int_{B(r) \cap {}^c \Omega \cap E} \fw \d y = \mu(r) \barint_{B(r) \cap {}^c \Omega \cap E} \fw \d y
\end{align*}
with $\mu$ given by
\begin{align*}
 \mu(r) = \frac{|B(r) \cap E|}{|B(r)|} - \frac{|B(r) \cap \Omega|}{|B(r)|} + \frac{|B(r) \cap \Omega \cap {}^c E|}{|B(r)|}.
\end{align*}
So, taking into account \eqref{eq3:iitoi} and the continuity of $\fw|_{E}$ at $x$ we finally arrive at 
\begin{align*}
 \fw(x) = \liminf_{r \to 0} \mu(r) \cdot \fw(x)
\end{align*}
and in order to deduce $\fw(x) = 0$ it remains to make sure that the limit inferior is different from $1$. Thanks to the maximal Lebesgue density of $E$ at $x$, see \eqref{eq1:iitoi}, and as $x$ is contained in the open set $U_j$, we can simplify
\begin{align}
\label{eq4:iitoi}
 \liminf_{r \to 0} \mu(r) = 1 - \limsup_{r \to 0}\frac{|B(r) \cap U_j \cap \Omega|}{|B(r)|} + 0. 
\end{align}
In order to handle the middle term, we recall the following fundamental property of Sobolev extension domains~\cite[Thm.~2]{Hajlasz-Koskela-Tuominen}.

\begin{lemma}
\label{lem:Hajlasz}
If a domain $V \subseteq \R^n$ admits a bounded Sobolev extension operator $\E: \W^{1,p}(V) \to \W^{1,p}(\R^n)$ for some $p \in [1,\infty)$, then $V$ is $n$-Ahlfors regular.
\end{lemma}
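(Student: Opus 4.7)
\emph{Approach.} My plan is to follow the Haj\l{}asz-Koskela-Tuominen strategy of testing the extension operator $\E$ against a family of Lipschitz cutoffs concentrated near the point in question. The upper bound $|B(x,r)\cap V|\leq|B(x,r)|$ being automatic, I will focus on the lower bound $|B(x,r)\cap V|\gtrsim r^n$ for $x\in V$ (the case $x\in\partial V$ then follows by an easy approximation argument) and $0<r<1$.

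\emph{Key steps.} First, for $x\in V$ and $0<r<1$, I would pick a standard cutoff $\phi_r\in\C_0^\infty(\R^n)$ with $\phi_r\equiv1$ on $B(x,r/2)$, $\supp\phi_r\subseteq B(x,r)$, and $|\nabla\phi_r|\lesssim r^{-1}$. Its restriction satisfies
\begin{align*}
\|\phi_r|_V\|_{\W^{1,p}(V)}^p\lesssim r^{-p}|B(x,r)\cap V|.
\end{align*}
Second, applying $\E$ gives $v_r:=\E(\phi_r|_V)\in\W^{1,p}(\R^n)$ coinciding with $\phi_r$ a.e.~on $V$ -- in particular $v_r\equiv1$ a.e.~on $V\cap B(x,r/2)$ -- and enjoying the analogous norm bound $\|v_r\|_{\W^{1,p}(\R^n)}^p\lesssim r^{-p}|B(x,r)\cap V|$. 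Third, confronting this with the isocapacitary inequality $|E|^{(n-p)/n}\lesssim\gp(E)$ (valid for $p<n$) and the obvious estimate $\gp(V\cap B(x,r/2))\lesssim\|v_r\|_{\W^{1,p}(\R^n)}^p$, I would deduce the dyadic comparison
\begin{align*}
|V\cap B(x,r/2)|^{(n-p)/n}\lesssim r^{-p}|V\cap B(x,r)|,
\end{align*}
or, equivalently, $\rho(r/2)\lesssim\rho(r)^{p^*/p}$ for $\rho(r):=|V\cap B(x,r)|/r^n$ and $p^*=np/(n-p)$.

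\emph{Main obstacle.} The hard part will be converting the foregoing one-sided dyadic recursion into a genuine uniform lower bound on $\rho$: by itself it does not rule out $\rho\to 0$ at small scales, since $p^*/p>1$. Closing the argument requires an additional input not present in the Sobolev embedding alone -- in \cite[Thm.~2]{Hajlasz-Koskela-Tuominen} the authors exploit a reverse-type Poincar\'e-Sobolev inequality arising from the extension property itself, coupled with maximal function estimates and Gehring-type self-improvement, which together pin $\rho$ away from zero. The limiting case $p=n$ invokes Trudinger's inequality in place of Sobolev's, and $p>n$ is handled via Morrey's embedding; both suffer from an analogous scaling mismatch and rely on the same self-improvement mechanism. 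For the complete argument I would refer the reader to the original work.
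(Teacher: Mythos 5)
First, a point of reference: the paper does not prove this lemma at all -- it quotes it as \cite[Thm.~2]{Hajlasz-Koskela-Tuominen} -- so a bare citation would already match the paper's treatment. Your sketch goes further, and the per-scale inequality you derive is correct in the range $p<n$ that your exponents implicitly assume; but the place where you stop, your ``main obstacle'', is not an obstacle there, and the machinery you invoke to overcome it is not needed. Read your dyadic inequality in the other direction: $\rho(r)\geq c\,\rho(r/2)^{\beta}$ with $\beta=p/p^{*}\in(0,1)$ and $c=c(n,p,\|\E\|)\leq 1$. For an interior point $x\in V$ there is $k_0$ with $B(x,2^{-k_0}r)\subseteq V$, hence $\rho(2^{-k_0}r)=|B(0,1)|$, and iterating upward through the scales $2^{-j}r$, $j=k_0,\ldots,1$, gives $\rho(r)\geq c^{\,1+\beta+\cdots+\beta^{k_0-1}}\,|B(0,1)|^{\beta^{k_0}}\geq c^{1/(1-\beta)}\min\bigl(1,|B(0,1)|\bigr)$, a bound independent of $x$, $r$ and of the starting scale $k_0$, precisely because the exponents $\beta^{j}$ are summable and $\beta^{k_0}\to 0$. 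Boundary points then follow by the approximation remark you already made. So one does not need to ``rule out $\rho\to 0$ at small scales'': at interior points $\rho$ equals the full density for free at small scales, and the recursion transports that lower bound up to the given scale. In particular, no reverse Poincar\'e inequality, maximal function estimates or Gehring-type self-improvement enter; you should not attribute such an argument to \cite{Hajlasz-Koskela-Tuominen} without having checked it.

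The genuine gap in your proposal is the range $p\geq n$, which you dismiss as ``analogous''. It is not: the identity $n/p-1=n/p^{*}$ is exactly what makes your recursion scale-invariant, and replacing $p^{*}$ by a finite exponent $q$ (Trudinger) or by Morrey's embedding destroys this balance -- repeating your computation with such substitutes only yields bounds of the form $|B(x,r)\cap V|\gtrsim r^{s}$ with $s>n$ (or bounds degenerating as $r\to 0$), which is strictly weaker than $n$-Ahlfors regularity. So for $p\geq n$ you would have to supply a genuinely different argument or fall back entirely on the citation, which is what the paper itself does. As written, your proposal neither completes the proof nor accurately reports the proof it defers to.
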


\noindent Owing to Assumption~\ref{ass:W1pExtension}, this lemma in particular applies to $V = U_j \cap \Omega$. Hence, there is a constant $c>0$ such that for all $y \in U_j \cap \Omega$ and all $r <1$ it holds
\begin{align*}
 \frac{|B(y,r) \cap U_j \cap \Omega|}{|B(y,r)|} \geq c.
\end{align*}
Since $x \in D \cap U_j$ lies on the boundary of $U_j \cap \Omega$, we can find for any $0<r<1$ a point $y \in  U_j \cap \Omega$ such that $B(y,r/2) \subseteq B(r) \subseteq B(y,2r)$. Thus,
\begin{align*}
 \frac{|B(r) \cap U_j \cap \Omega|}{|B(r)|} 
 \geq \frac{|B(y,r/2) \cap U_j \cap \Omega|}{|B(y,2r)|}
 \geq c \frac{|B(y,r/2)|}{|B(y,2r)|} = 4^{-n}c.
\end{align*}
In particular, going back to \eqref{eq4:iitoi} we obtain 
\begin{align*}
 \liminf_{r \to 0} \mu(r) \leq 1 - 4^{-n}c < 1
\end{align*}
and we had already convinced ourselves that this implies $\fw(x) = 0$. 

\begin{remark}
\label{rem:key}
In view of Lemma~\ref{lem:Hajlasz} we see that in the equation below \eqref{eq2:iitoi} our assumption on $u$ enabled us to  neglect the integral over a substantial part of $B(r)$, namely $B(r) \cap \Omega$. This is the key point in the proof. 
\end{remark}

\medskip

\emph{Step 4: Conclusion of the case $j \geq 1$}. So far we have shown that the precise representative $\fw$ of $w = |v_j|$ vanishes for $\gp$-almost every $x \in D$. In particular,
\begin{align*}
 \lim_{r \to 0} \bigg|\frac{1}{|B(x,r)|} \int_{B(x,r)} v_j \d y \bigg| = 0
\end{align*}
holds for $\gp$-almost every $x \in D$. Since $v_j$ is a Sobolev extension of $u_j$, the implication `(iii) $\Longrightarrow$ (i)' proved before yields $u_j \in \W_D^{1,p}(\Omega)$ as desired.

\subsection*{(i) \texorpdfstring{$\Longrightarrow$}{=>} (iii)}
This is precisely the statement of \cite[Thm.~6.9]{Hardy-Poincare} and the Havin-Bagby-Theorem. The proof relies on Assumption~\ref{ass:W1pExtension}, a localization procedure, and Lemma~\ref{lem:Hajlasz}.

\subsection*{(i) \texorpdfstring{$\Longrightarrow$}{=>} (ii)}
Let $u \in \W_D^{1,p}(\Omega)$. In view of the implication `(i) $\Longrightarrow$ (iii)' we have a Sobolev extension $v \in \W^{1,p}(\R^n)$ whose precise representative $\fv$ vanishes $\gp$-almost everywhere on $D$ at our disposal. The Lebesgue Differentiation Theorem for Sobolev functions discussed in Section~\ref{Sec:Preliminaries} yields
\begin{align*}
 \lim_{r \to 0} \frac{1}{|B(x,r)|} \int_{B(x,r)} |v| \d y = 0
\end{align*}
for $\gp$-almost every $x \in D$. Since $v$ extends $u$, we have
\begin{align*}
 \frac{1}{|B(x,r)|} \int_{B(x,r) \cap \Omega} |u| \d y \leq \frac{1}{|B(x,r)|} \int_{B(x,r)} |v| \d y 
\end{align*}
and the conclusion follows.

\subsection*{(iii) \texorpdfstring{$\Longleftrightarrow$}{<=>} (iv)}

The implication `(iii) $\Longrightarrow$ (iv)' is a direct consequence of the comparison principle stated in Lemma~\ref{lem:ComparisonPrinciple}. It does not require $D$ to be $l$-Ahlfors regular. As for the reverse implication, it has been shown in \cite[Cor.~4.5]{TripleMitrea-Brewster} that if $v \in \W^{1,p}(\R^n)$ is such that
\begin{align*} 
 \lim_{r \to 0} \frac{1}{|B(x,r)|} \int_{B(x,r)} v \d y = 0 
\end{align*}    
holds for $\cH_l$-almost every $x \in D$ and if $D$ is $l$-Ahlfors regular with parameter $l \in (n-p, n)$, then the same convergence already holds for $\gp$-almost every $x \in D$. In fact, this is a rather direct consequence of the deep extension/restriction-theory for Besov spaces on Ahlfors-regular sets developed by Jonsson and Wallin \cite{Jonsson-Wallin}.

\subsection*{(i) \texorpdfstring{$\Longleftrightarrow$}{=>} (v)}
This is precisely the main result on Hardy's inequality for Sobolev functions vanishing on a part of the boundary obtained in \cite[Thm.~3.2 \& 3.4]{Hardy-Poincare}.

\subsection*{(ii) \texorpdfstring{$\Longrightarrow$}{=>} (vi)}
This implication follows once again from the comparison principle.

\subsection*{(vi) \texorpdfstring{$\Longrightarrow$}{=>} (iii)}
This will be obtained by re-running the proof of `(ii) $\Longrightarrow$ (i)'. First, we split $u = \sum_{j = 0}^N u_j$ as before. Concerning $u_0$, our assumption (vi) and the support property of $\eta_0$ imply
\begin{align*}
 \lim_{r \to 0} \frac{1}{|B(x,r)|} \int_{B(x,r) \cap \Omega} |u_0| \d y = 0,
\end{align*}
now only for $\cH_l$-almost every $x \in \partial \Omega$. The second part of Proposition~\ref{Prop:SZ} yields that the zero extension $u_0^*$ is a Sobolev extension of $u_0 \in \W^{1,p}(\Omega)$ with the property required in (iv) and we deduce $u_0 \in \W^{1,p}_D(\Omega)$ from the equivalence with (i). 

Turning to $u_j$ in the case $j \geq 1$, the difference with the proof of `(ii) $\Longrightarrow$ (i)' is that the exceptional set designed in Step~2 is only of vanishing $\cH_l$-measure. However, then we can apply Step~3 \emph{verbatim} to obtain that the extension $v_j \in \W^{1,p}(\R^n)$ satisfies
\begin{align*}
 \lim_{r \to 0} \bigg|\frac{1}{|B(x,r)|} \int_{B(x,r)} v_j \d y \bigg| = 0
\end{align*}
for $\cH_l$-almost every $x \in D$. Hence, $v_j$ has again the property required in (iv) and we conclude $u_j \in \W^{1,p}_D(\Omega)$ as before. \hspace*{\fill} $\square$
\section{Counterexamples}
\label{Sec:Examples}

\noindent We provide two examples showing that without a certain regularity assumption on $\Omega$ near the common frontier of $D$ with its complementary boundary part (as guaranteed by Assumption~\ref{ass:W1pExtension}) the equivalence of (i) and (ii) in Theorem~\ref{thm:main} can fail in both directions. For simplicity of exposition both examples are constructed in the plane but the construction can easily be transferred to higher dimensions.

To begin with, we construct a fractal domain $\Omega \subseteq \R^2$ depending on two sequences of positive parameters $\{a_j\}_{j \in \IN}$ and $\{b_j\}_{j \in \IN}$. Here, $\IN = \{0,1,\ldots\}$. We consider the infinite graph consisting of all edges and vertices of the collection of dyadic squares 
\begin{align*}
\bigg\{ \bigg[\frac{k}{2^j}, \frac{k+1}{2^j}\bigg] \times \bigg[\frac{1}{2^j}, \frac{1}{2^{j-1}}\bigg] : j,k \in \IN,\, k \leq 2^j -1 \bigg\}
\end{align*}
displayed on the left of Figure~\ref{fig1}. For each $j \in \IN$ it contains exactly $2^j$ horizontal and $2^j +1$ vertical edges of length $2^{-j}$, which we denote from left to right by $h_j^0,\ldots, h_j^{2^j-1}$ and $v_j^0,\ldots, v_j^{2^j}$, respectively. From this `skeleton' we construct the domain $\Omega$ by blowing up the line segments $h_j^k$ and $v_j^k$ to open rectangles $H_j^k = h_j^k + (-a_j,a_j)^2$ and $V_j^k = v_j^k + (-b_j,b_j)^2$:
\begin{align*}
 \Omega := \bigcup_{j =0}^\infty \bigg( \bigcup_{k=0}^{2^j-1} H_j^k \cup \bigcup_{k=0}^{2^j} V_j^k \bigg),
\end{align*}
compare with Figure~\ref{fig1}. Here, we write $A + B = \{a+b : a \in A, \, b \in B\}$ for the sum of two sets $A, B \subseteq \R^2$, so that for example $H_j^k$ has horizontal side length $2^{-j} + 2a_j$ and vertical side length $2a_j$. We shall always choose $0<a_j,b_j <2^{-j-1}$ in order to arrange the overlap of the horizontal and vertical rectangles as displayed schematically in Figure~\ref{fig1}. Note that the Dirichlet part $D := [0,1] \times \{0\}$ is a closed, $1$-Ahlfors regular subset of $\partial \Omega$ and that $\Omega$ exhibits Lipschitz coordinate charts around every boundary point $x \in \partial \Omega \setminus D$. 

\begin{figure}[ht]
\begin{center}
 \includegraphics[scale=.6]{./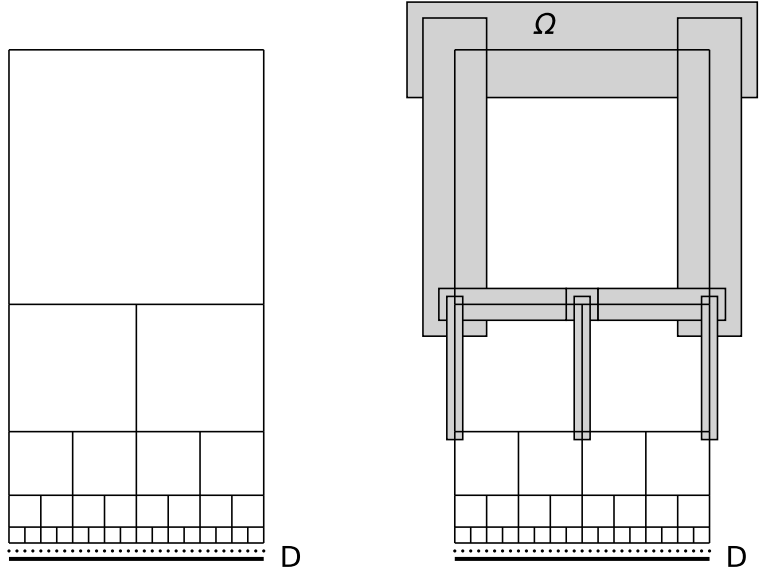}
 \caption{The dyadic `skeleton' of $\Omega$ is obtained from the square $[0,1] \times [1,2]$ by iteratively attaching a total number of $2^j$ disjoint squares of side length $2^{-j}$ at the bottom of the existing construction. The domain $\Omega$ is then constructed by blowing up the line segments to appropriately sized open rectangles.}
 \label{fig1}
\end{center}
\end{figure}

\begin{example}
\label{Ex:W1pD but no averages}
We let $1<p < \infty$ and construct $\Omega$ using the sequences $a_j = 2^{-j-2}$ and $b_j = 2^{-(1+p)j}$. We claim that the constant function $u = 1$ is contained in $u \in \W_D^{1,p}(\Omega)$ although the condition in part (ii) of Theorem~\ref{thm:main} fails at every boundary point $x \in D$. 

To see the second claim, let $x \in D$ and $0 < r < 1$. If $j \in \IN$ satisfies $2^{-j+1} \leq r/2$, then $B(x,r) \cap \Omega$ contains a rectangle of horizontal side length $r/2$ and vertical side length $a_j = 2^{-j-2}$. Thus,
\begin{align*}
 \frac{1}{|B(x,r)|} \int_{B(x,r) \cap \Omega} |u| \d y \geq \frac{1}{\pi r^2} \sum_{\substack{j \in \IN \\ 2^{-j+1} \leq r/2}} r 2^{-j-3} \geq \frac{1}{32 \pi},
\end{align*}
showing that the condition in part (ii) of Theorem~\ref{thm:main} fails. On the other hand,
\begin{align*}
 u_j: \Omega \to [0,1], \quad u_j(y_1,y_2) = \begin{cases}
                                              0 &\text{if $y_2 < 2^{-j-1} + 2^{-j-3}$}   \\                                           
                                              1 &\text{if $y_2 > 2^{-j} - 2^{-j-2}$}\\
                                              2^{j+3}y_2 - 5 &\text{else}
                                             \end{cases}
\end{align*}
is continuous, piecewise affine, and its support is disjoint from $D$. Lebesgue's theorem guarantees $u_j \to u$ in $\L^p(\Omega)$ and since by construction $\nabla u_j$ is supported in the set $\bigcup_{k=0}^{2^j} V_j^k$ and satisfies the pointwise bound $|\nabla u_j| \leq 2^{j+3}$, we also obtain
\begin{align*}
 \int_{\Omega} |\nabla u_j|^p \d y 
 &\leq \sum_{k=0}^{2^j} 2^{(j+3)p} |V_j^k|
 \lesssim 2^{-j},
\end{align*}
that is, $\nabla u_j \to \nabla u$ in $\L^p(\Omega)$. In order to conclude $u \in \W^{1,p}_D(\Omega)$ it suffices to convolve the approximants $u_j$ by smooth kernels with sufficiently small support.
\end{example}

\begin{example}
\label{Ex:Averages but no W1pD}
We let $4 < p < \infty$ and construct $\Omega$ using $a_j = b_j = 4^{-j-1}$. We claim that this time the constant function $u = 1$ is not contained in $\W_D^{1,p}(\Omega)$ although the condition in part (ii) of Theorem~\ref{thm:main} holds at every boundary point $x \in D$.

In order to see the second claim, let $x \in D$ and $0 < r < 1$. For each $j \in \IN$ it follows from the dyadic structure of the skeleton for $\Omega$ that $B(x,r)$ intersects at most $\lfloor2r/2^{-j} + 3 \rfloor$ of the vertical rectangles $V_j^k$, each of which has measure $|V_j^k| \leq 2^{-3j}$. As for the horizontal rectangles, we simply observe that $\bigcup_{k=0}^{2^j-1} H_j^k \cap B(x,r)$ is contained in a rectangle with side lengths $2r$ and $2^{-2j-1}$. In conclusion,
\begin{align*}
 \frac{1}{|B(x,r)|} \int_{B(x,r)} |u| \d y 
 \leq \frac{1}{\pi r^2} \sum_{\substack{j \in \IN \\ 2^{-j} \leq r}} \lfloor r 2^{j+1} + 3 \rfloor 2^{-3j} + r 2^{-2j}
 \lesssim r,
\end{align*}
taking care of the condition in part (ii) of Theorem~\ref{thm:main}. 

Next, we shall prove that despite its rather irregular structure the domain $\Omega$ still admits the Poincar\'{e} inequality
\begin{align}
\label{eq:Ex2 Poincare}
 \|v\|_{\L^\infty(\Omega)} \lesssim \|\nabla v\|_{\L^p(\Omega)} \qquad (v \in \W^{1,p}_D(\Omega)).
\end{align}
In particular, this implies $u \notin \W^{1,p}_D(\Omega)$. By density we can assume $v \in \C_D^\infty(\Omega)$. Since $p>2 = n$, there is a constant $C>0$ depending only on $p$ such that on every open square $Q \subseteq \R^2$ with sidelength $\ell(Q)>0$ we have Morrey's estimate
\begin{align}
\label{eq:Ex2 Morrey}
 |v(a) - v(b)| \leq C \ell(Q)^{1-2/p}\|\nabla v\|_{\L^p(Q)} \qquad (a,\,b \in Q),
\end{align}
see for instance \cite[Lem.~7.12 \& 7.16]{Gilbarg-Trudinger}. Next, we consider a rectangle $R_j \subseteq \Omega$ of side lengths $2 \cdot 4^{-j-1}$ and $2^{-j} + 2 \cdot 4^{-j-1}$ for some $j \in \IN$, for example one of the $V_j^k$ or $H_j^k$. Any two points $a, b \in R_j$ can be joined by a chain of squares $Q_1,Q_2,\ldots,Q_{2^{j+4}}$ with radii $4^{-j-1}$ that are all contained in $R_j$ and have the properties $a \in Q_1$, $b \in Q_{2^{j+4}}$, and $Q_m \cap Q_{m+1} \neq \emptyset$ for $1 \leq m \leq 2^{j+4}-1$. By a telescoping sum and \eqref{eq:Ex2 Morrey},
\begin{align}
\label{eq:Ex2 Morrey rec}
 |v(a) - v(b)| \leq \sum_{m=1}^{2^{j+4}} C 4^{(j+1)(2/p - 1)} \|\nabla v\|_{\L^p(Q_m)} \leq 16C 2^{j(4/p -1)} \|\nabla v\|_{\L^p(\Omega)}.
\end{align}
Finally, let $y \in \Omega$. There exist $j' \in \IN$ and $0 \leq k \leq 2^{j'}$ such that $y \in V_{j'}^k$ or $y \in H_{j'}^k$. In the first case we consider the chain of rectangles $R_j := V_j^k$, $j \geq j'$, which have the property that $y \in R_{j'}$ and $R_j \cap R_{j+1} \neq \emptyset$ for all $j \geq j'$. In the second case we add $R_{j'-1} := H_{j'}^k$ to the chain. Now, $v \in \C_D^\infty(\Omega)$ implies that $v = 0$ holds everywhere on $R_j$ for $j$ sufficiently large. Hence, \eqref{eq:Ex2 Morrey rec} and another telescoping sum yield
\begin{align*}
 |v(y)| \leq 32C \|\nabla v\|_{\L^p(\Omega)} \sum_{j =0}^\infty 2^{j(4/p -1)}
\end{align*}
and the geometric series converges due to our assumption $p>4$. This proves \eqref{eq:Ex2 Poincare}.
\end{example}

\end{document}